\documentclass[a4paper]{article}
\usepackage[utf8]{inputenc}
\usepackage[english]{babel}
\usepackage{amsmath}
\usepackage{amsfonts}
\usepackage{amsthm}
\usepackage{amssymb}
\usepackage{graphicx}
\usepackage{tikz}
\usetikzlibrary{shapes,snakes}
\usepackage{subcaption}
\usepackage{makecell}
\usepackage{booktabs}
\usepackage{multirow}

\usepackage{nicefrac}
\usepackage[short, nocomma]{optidef}

\usepackage[normalem]{ulem} 

\definecolor{MidnightBlue}{rgb}{0.1, 0.1, 0.44}
\definecolor{dorange}{rgb}{1,0.5,0}
\definecolor{darkgreen}{rgb}{0,0.5,0}
\definecolor{dkgreen}{RGB}{0,153,0}
\definecolor{violett}{RGB}{90,0,90}
\definecolor{turkis}{RGB}{0,147,168}
\definecolor{deepmagenta}{rgb}{0.8, 0.0, 0.8}
\definecolor{amber}{rgb}{1.0, 0.75, 0.0}
\definecolor{applegreen}{rgb}{0.55, 0.71, 0.0}
\definecolor{azure}{rgb}{0.0, 0.5, 1.0}

\setlength{\fboxsep}{5pt}

\newcommand{\seq}{\textup{Seq}}
\newcommand{\Int}{\textup{Int}}
\newcommand{\msp}{(MSP)}

\newcommand{\R}{\mathbb{R}}
\newcommand{\N}{\mathbb{N}}
\newcommand{\Z}{\mathbb{Z}}
\newcommand{\st}{\textup{s.t.}}
\newcommand{\PoS}{\textup{P\hspace{-0.3mm}o\hspace{-0.1mm}S}}

\newcommand{\cP}{\mathcal{T}}
\newcommand{\rep}{t}
\newcommand{\cL}{{\cal L}}

\newcommand{\cI}{\mathcal{I}}

\newcommand{\cE}{\mathcal{E}}
\newcommand{\cA}{\mathcal{A}}
\newcommand{\aA}{{\sf A}}

\newcommand{\cN}{\mathcal{N}}
\newcommand{\femax}{f_e^{\max}}
\newcommand{\femin}{f_e^{\min}}
\newcommand{\trans}{\textup{trans}}
\newcommand{\wait}{\textup{wait}}
\newcommand{\drive}{\textup{drive}}
\newcommand{\arr}{\textup{arr}}
\newcommand{\dep}{\textup{dep}}
\newcommand{\OD}{\textup{OD}}

\newcommand{\aux}{\textup{aux}}

\newcommand{\src}{\textup{source}}
\newcommand{\tar}{\textup{target}}

\newcommand{\length}{\textup{length}}
\newcommand{\depot}{\textup{depot}}
\newcommand{\first}{\texttt{first}}

\newcommand{\Cost}{\textup{cost}}
\newcommand{\cost}{4}
\newcommand{\linecost}{1}
\newcommand{\TT}{3}
\newcommand{\Start}{\alpha}
\newcommand{\End}{\omega}
\newcommand{\dur}{\delta}
\newcommand{\Veh}{\textup{Veh}}
\newcommand{\xbari}[1]{\bar{x}_1,\ldots,\bar{x}_{#1}}
\newcommand{\xbar}{(\bar{x}_1,\ldots,\bar{x}_{i-1})}
\newcommand{\fxbar}{f_i(\bar{x}_1, \ldots,\bar{x}_{i-1};x_i)}
\newcommand{\Fxbar}{F_i(\bar{x}_1, \ldots,\bar{x}_{i-1})}
\newcommand{\Part}[1]{\bar{x}^{#1}}
\newcommand{\PartOpt}[1]{\bar{x}^*_{#1}}

\newcommand{\timpass}{(TimPass)}
\newcommand{\lintimpass}{(LinTimPass)}

\newcommand{\timveh}{(TimVeh)}
\newcommand{\lintimveh}{(LinTimPassVeh)}

\theoremstyle{plain}
\newtheorem{theorem}{Theorem}
\newtheorem{lemma}[theorem]{Lemma}
\newtheorem{corollary}[theorem]{Corollary}
\theoremstyle{remark}

\theoremstyle{definition}
\newtheorem{definition}[theorem]{Definition}
\newtheorem{notation}[theorem]{Notation}

\newtheorem{example}[theorem]{Example}
\parindent 0cm

\definecolor{darkgreen}{rgb}{0,0.5,0}

\title{Integrated Optimization of Sequential Processes: General Analysis and Application to Public Transport\footnote{This work was partially supported by DFG under SCHO 1140/8-1.}}
\author{Philine Schiewe$^1$\footnote{Corresponding author}, Anita Schöbel$^2$}

\date{}

\begin{document}
\maketitle

\vspace*{-2em}
\begin{center}
\small{
$^1$: Department of Mathematics,\\
Technische Universität Kaiserslautern,\\
Paul-Ehrlich-Straße~14, 67663 Kaiserslautern, Germany,\\
p.schiewe@mathematik.uni-kl.de

\medskip
$^2$: Department of Mathematics,\\
Technische Universität Kaiserslautern,\\
Gottlieb-Daimler-Straße~48, 67663 Kaiserslautern, Germany,\\
schoebel@mathematik.uni-kl.de\\
\smallskip
and\\
\smallskip
Fraunhofer Institute for Industrial Mathematics ITWM,\\
Fraunhofer Platz 1, 67663 Kaiserslautern, Germany
}
\end{center}
\bigskip

\begin{abstract}
  Planning in public transportation is traditionally done in a sequential process: After the network design process,
the lines and their frequencies are planned. When these are fixed, a timetable is determined and based on the
timetable, the vehicle and crew schedules are optimized. After each step, passenger routes are adapted
to model the behavior of the passengers as realistically as possible.
It has been mentioned in many publications that
such a sequential process is sub-optimal, and integrated approaches, mainly heuristics, are under consideration. 
Sequential planning is not only common in public transportation planning but also in many other applied
problems, among others in supply chain management, or in organizing hospitals efficiently. 
\medskip

The contribution of this paper hence is two-fold: on the one hand, we develop
an integrated integer programming formulation for the three planning stages line
planning, (periodic) timetabling, and vehicle scheduling which also includes the integrated optimization of the
passenger routes. This gives us an exact formulation rewriting the
sequential approach as an integrated problem. We discuss properties of the integrated formulation and apply
it experimentally to data sets from the {\sf LinTim} library. On small examples, we get an exact
optimal objective function value for the integrated formulation which can be compared with the outcome
of the sequential process.

On the other hand, we propose a mathematical formulation for general sequential processes which can be used
to build integrated formulations. For comparing sequential processes with their integrated counterparts
we analyze the \emph{price of sequentiality}, i.e., the ratio between the solution obtained by the
sequential process and an integrated solution. We also experiment with different possibilities for
partial integration of a subset of the sequential problems and again illustrate
our results using the case of public transportation. The obtained results may be useful for other
sequential processes.
\end{abstract}

\textbf{Keywords:}
public transport planning, line planning, timetable, vehicle scheduling, passenger routes, sequential process, integrated optimization, price of sequentiality

\section{Introduction}

Public transport planning, as described in \cite{desaulniers2007public} encompasses various planning stages,
such as network design, line
planning, timetabling, passenger routing, vehicle scheduling and crew scheduling. Although these
stages are highly interdependent, they are usually solved sequentially with the passengers' paths
being adapted after each stage. Within the last few years, researchers noticed that this is a greedy-like
approach and started working on integrated approaches instead. Also planning problems in many other fields
are multi-stage optimization problems which are too complicated to be solved in an integrated manner and hence their different
stages are treated sequentially. Such sequential procedures are in particular common when infrastructural decisions are
involved. These are made first (finding locations, building offices, houses,
or shops, inventory decisions) while in a second phase the assignment, scheduling, or routing is done. An
integrated model would allow to make the infrastructural decision already in such a way that the second-stage
decision is as good as possible. This applies, e.g., for the design of supply chains, for production processes, or
for planning the operations in airports or hospitals. Also in these fields, researchers noticed that integrated optimization
may be more beneficial than only concentrating on the single stages. For example, 
integrated approaches have been compared to the sequential approach
in supply chain management, \cite{kiddeuro2018} or in production-distribution systems.
For the latter, \cite{Darvish18} analyzes differences between the sequential steps of location,
production, inventory, and distribution  decisions compared to an integrated approach while \cite{archetti2016inventory,absi2018comparing} focus on the benefits of integration for the production and inventory routing problem, respectively.

In the literature, there are also a few general ideas on how to integrate different planning stages or subproblems into
one optimization model for which general solution approaches can be formulated. One general scheme which allows to
find a local optimum is the eigenmodel proposed in \cite{Sch16}. Here, it is shown how to develop approaches
which iteratively solve subproblems being related to the different stages of a multi-stage problem. Convergence of
these iterative approaches is analyzed in \cite{JaeSch18}.
Formulating a sequential process as one integrated optimization model is further complicated if every
planning stage has its
own objective function, or if the whole problem is multi-objective by definition. In these cases, defining an
objective function for the integrated problem is not easy, taking all decision makers into account. It is also
not clear how the quality of results should be measured. For modeling situations through interwoven or complex systems, we
refer to \cite{Klamroth20,DIETZ2020581}. 
\medskip

In this paper we study sequential processes and how they can be integrated into one common optimization
problem. We are in particular interested how much we can improve the objective function value by solving
the integrated model instead of taking the sequential solution. 
We derive the notation, analysis and results for the general case of sequential processes but illustrate
everything in the case of public transport planning
where we focus on the three consecutive planning stages
line planning, (periodic) timetabling, and vehicle scheduling together with finding the passenger routes.
As a side-effect we present an integer programming
formulation for the integrated problem of line planning, timetabling, vehicle scheduling, and passenger routing.
\medskip

The remainder of the paper is structured as follows: In Section~\ref{sec-notation} we start by
providing a general scheme for dealing with sequential processes. We introduce the notation needed and
show how a sequential process can be transformed into an integrated formulation.
While this can be written down easily, 
formulating such an integrated problem in a practical case may be much harder. This is illustrated by
developing an integrated formulation for the sequential process of line planning, timetabling and vehicle
scheduling (including passenger routing) in Section~\ref{sec-LinTimvehPass}. This section includes
a literature review on work towards integrated optimization in public transportation as well
as a description of the sequential process and the integrated model for this case.
In Section~\ref{sec-PoS} we introduce the \emph{price of sequentiality (\PoS)} which is used to compare the
sequential solution to an integrated solution and we derive theoretical properties for the PoS. Section~\ref{sec:partially_integrated} is devoted to the case of partial integration and its price of sequentiality. Finally,
in Section~\ref{sec-experiments} we show experimental results using the {\sf LinTim}-library
\cite{lintimhp,lintim}. The paper is ended by a conclusion in Section~\ref{sec-conclusion}.

\section{Sequential processes and their integration}
\label{sec-notation}

Sequential processes and their integrated solution have been considered 
 in \cite{borndorfer2017passenger,mariediss,Sch16,kiddeuro2018,archetti2016inventory} for special applications.
 A more general framework which is the basis for the notation used in this paper
 has been introduced in \cite{philinediss}. Along these lines
 we start by defining a sequential process.
 
\begin{definition}\label{def:sequential}
  A \emph{sequential process} is given by a sequence of \emph{sequential problems}
  $(\seq_i \xbar)$, $i \in \{1, \ldots, n\}$, 
	\begin{align*}
			(\seq_i \xbar) \quad \min \; & \fxbar \\
			\st \quad  & x_i \in \Fxbar.
        \end{align*}
        For $i \in \{1,\ldots,n\}$ we call  $(\seq_i \xbar)$ \emph{stage $i$}
        of the sequential process. It includes $m_i \in \N$ variables
        $x_i \in \R^{m_i}$, a feasible set $\Fxbar \subseteq \R^{m_i}$ 
        and objective function $f_i \colon \R^{m_1} \times
          \ldots \times \R^{m_i} \to \R$.

          $f_i$ depends not only on the
          variables $x_i \in \R^{m_i}$ of the current stage but also on
          the variables of previous stages. When we want to indicate that
          only $x_i$ are variables, we separate them by a semicolon and
          write 
        $f_i(\xbari{i-1};\cdot) \colon \R^{m_i} \to \R$.
      \end{definition}
      
 Let $(\seq_1)$ be feasible with optimal solution $\bar{x}_1$ and let $(\seq_i(\bar{x}_1, \ldots, \bar{x}_{i-1}))$ be feasible with optimal solution $\bar{x}_i$ for each $i \in \{2, \ldots, n\}$. Then we call
 \[ \bar{x}=(\bar{x}_1,\ldots, \bar{x}_n) \]
 a \emph{sequential solution}.
\medskip

Let us assume that we are able to solve each of the sequential problems \linebreak
  $(\seq_i \xbar)$ for $i\in \{1, \ldots, n\}$ to optimality by known deterministic algorithms
  ${\aA}_1,\ldots,{\aA}_n$. Applying these algorithms ${\aA}_1,\ldots,{\aA}_n$
sequentially outputs a sequential solution and is hence called
\emph{sequential solution approach}. It is in short given by
\smallskip
    
\begin{center}
\small{
		\begin{tabular}{llll}
		\toprule
		\textbf{Problem} & \textbf{Algorithm} & \textbf{Input} & \textbf{Output}\\
		\midrule
        $(\seq_1)$ & ${\aA}_1$ & $\emptyset$ & $\bar{x}_1$\\
        $(\seq_2(\bar{x}_1))$ & ${\aA}_2$ & $\bar{x}_1$ & $\bar{x}_2$\\
        $(\seq_3(\bar{x}_1,\bar{x}_2))$ & ${\aA}_3$ & $\bar{x}_1,\bar{x}_2$ & $\bar{x}_3$\\
        & \vdots & \\
        $(\seq_n(\bar{x}_1,\bar{x}_2,\ldots,\bar{x}_{n-1}))$ & ${\aA}_n$ & $\bar{x}_1,\bar{x}_2,\ldots,\bar{x}_{n-1}$ & $\bar{x}_n$\\
        \bottomrule
      \end{tabular}  }   
\end{center}      
      \bigskip

Obviously, such a sequential process is a Greedy-like approach: in every stage
 we do the best possible. The question is if this is really the best possible
 for the sequential process as a whole.
 To give an answer we need to compare the sequential solution approach with the
\emph{integrated solution approach} of solving the following \emph{integrated problem}.

      \begin{definition}
        \label{def:integrated}
	For a sequential process as in Definition~\ref{def:sequential}, and weights $\lambda_i \geq 0$, $i \in \{1, \ldots, n\}$, the corresponding \emph{integrated optimization problem} is defined as a multi-stage optimization problem
	\begin{align*}
            \textup{\msp} \quad  \min\; & f(x_1,\ldots,x_n) =
                                        \sum_{i=1}^n \lambda_i \cdot f_i(x_1,\ldots, x_i)\\                            
			\st \quad x_1 & \in F_1\\
			x_2 & \in F_2(x_1)\\
			& \vdots \\
			x_n & \in F_n(x_1, \ldots, x_{n-1}).
		\end{align*}
          An optimal solution to \msp\ is called \emph{(optimal) integrated solution}.    
   	\end{definition}

The parameters $\lambda_1,\ldots,\lambda_n$ represent how important the objective of stage $i$
  is for the whole process. They can also be motivated by multi-objective optimization:
  Let us treat the multi-stage problem (MSP) in a multi-objective fashion.
  Instead of one common objective function, we then consider 
  \[f=\left(\begin{array}{c} f_1(x_1)\\ f_2(x_1,x_2) \\  \vdots \\ f_n(x_1,\ldots,x_n)\end{array}\right) \]
  as a \emph{vector-valued} objective function. It is known (see, e.g., \cite{Ehrgott05})
  that for $\lambda_i>0$ for all $i\in \{1,\ldots,n\}$ a (supported) Pareto solution will be found,
  and for $\lambda_i \geq 0$ for all $i\in \{1,\ldots,n\}$ a (supported) weakly Pareto solution is obtained.
\medskip

Clearly, the integrated solution is always better than a sequential solution.

\begin{lemma}
\label{Int-better-than-Seq}    
  Let $x^*=(x^*_1,\ldots,x^*_n)$ be an optimal solution to (MSP) and let $\bar{x}=(\bar{x}_1,\ldots,\bar{x}_n)$ be a sequential solution, i.e., $\bar{x}_i$ is an optimal solution to $(\seq_i(\bar{x}_1, \ldots, \bar{x}_{i-1}))$ for $i\in\{1,\ldots,n\}$. Then $f(x^*) \leq f(\bar{x})$.
\end{lemma}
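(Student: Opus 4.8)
The plan is to show that any sequential solution $\bar{x}$ is itself a feasible point for \msp, and then to conclude by optimality of $x^*$. The entire argument reduces to a containment of feasible sets: the constraints defining \msp\ are exactly the constraints that a sequential solution satisfies stage by stage, so nothing beyond bookkeeping is required.

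First I would verify feasibility of $\bar{x}=(\bar{x}_1,\ldots,\bar{x}_n)$ for \msp. By the definition of a sequential solution, $\bar{x}_1$ is an optimal solution to $(\seq_1)$ and hence in particular $\bar{x}_1 \in F_1$. For each $i \in \{2,\ldots,n\}$, the vector $\bar{x}_i$ is an optimal solution to $(\seq_i(\bar{x}_1,\ldots,\bar{x}_{i-1}))$, whose constraint is precisely $x_i \in F_i(\bar{x}_1,\ldots,\bar{x}_{i-1})$; therefore $\bar{x}_i \in F_i(\bar{x}_1,\ldots,\bar{x}_{i-1})$. Collecting these memberships, $\bar{x}$ satisfies every constraint listed in Definition~\ref{def:integrated}, so $\bar{x}$ lies in the feasible region of \msp.

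Second, since $x^*$ is by assumption an optimal solution to \msp\ and $\bar{x}$ is a feasible solution to the same problem, optimality of $x^*$ gives immediately $f(x^*) \leq f(\bar{x})$, which is the claim. Note that this step uses only that $f$ (the same objective $\sum_{i=1}^n \lambda_i f_i$) is minimized over the feasible set in \msp; the particular form of the $f_i$ and the weights $\lambda_i$ play no role.

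I do not expect any real obstacle here: the only point that must be stated carefully is that being an optimal solution of the stage problem $(\seq_i(\bar{x}_1,\ldots,\bar{x}_{i-1}))$ entails, in particular, \emph{feasibility} for that stage, i.e.\ $\bar{x}_i \in F_i(\bar{x}_1,\ldots,\bar{x}_{i-1})$. Once this is made explicit, the feasibility of $\bar{x}$ for \msp\ and hence the inequality $f(x^*)\leq f(\bar{x})$ follow directly, with no estimation or calculation needed.
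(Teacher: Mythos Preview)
Your proof is correct and follows exactly the paper's approach: the paper's proof is the one-line observation that the sequential solution is feasible for \msp, which you have spelled out in detail. Nothing needs to be added or changed.
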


\begin{proof} The result directly follows from the fact that the sequential solution is a feasible solution to (MSP). \end{proof}

Before we continue, we illustrate the notation in the example of public transport optimization.

\section{The sequential planning process in public transport and an integrated formulation}
\label{sec-LinTimvehPass}

In this section we develop an integrated formulation 
for the sequential process in public transport planning as depicted in Figure~\ref{fig:structure}.
The main effort for this lies in the formulation of the single planning stages as in Definition~\ref{def:sequential}.
These are provided in Section~\ref{sec:seq_sol}.
The integrated formulation is then put together in Section~\ref{sec:model}.

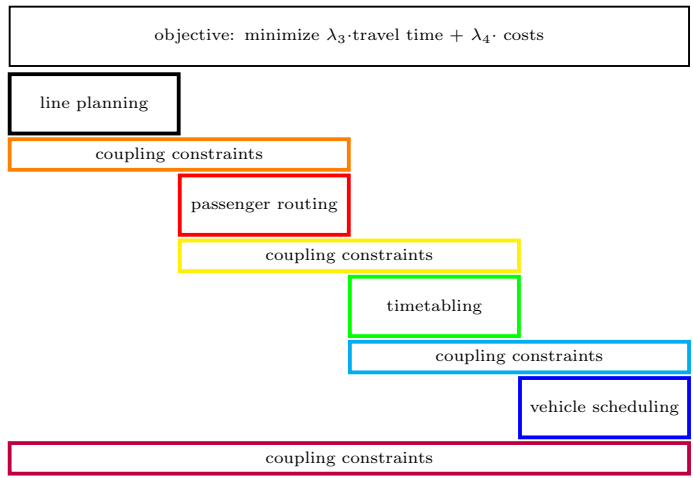
\begin{figure}	
	\begin{center}
	\resizebox{0.75\textwidth}{!}{	
	\begin{tikzpicture}[thick, font=\scriptsize]
	\node[draw, align=center, text width=9.75cm, minimum height=2.5em] (obj) at (-5,4) {objective: minimize  $\lambda_{\TT} \cdot$travel time + $\lambda_{\cost} \cdot$ costs};
	\node[draw, align=center, text width=2.25cm, minimum height=2.5em, ultra thick] (L) at (-8.75cm,3) {line planning};
	
	\node[draw=orange, align=center, text width=4.75cm, minimum height=1.25em, ultra thick] (LP) at (-7.5cm,2.25) {coupling constraints};
	
	\node[draw=red, align=center, text width=2.25cm, minimum height=2.5em, ultra thick] (P) at (-6.25cm,1.5) {passenger routing};
	
	\node[draw=yellow, align=center, text width=4.75cm, minimum height=1.25em, ultra thick] (PT) at (-5cm,0.75) {coupling constraints};
	
	\node[draw=green, align=center, text width=2.25cm, minimum height=2.5em, ultra thick] (T) at (-3.75cm,0) {timetabling};
	
	\node[draw=cyan, align=center, text width=4.75cm, minimum height=1.25em, ultra thick] (TV) at (-2.5cm,-0.75) {coupling constraints};
	
	\node[draw=blue, align=center, text width=2.25cm, minimum height=2.5em, ultra thick] (V) at (-1.25cm,-1.5) {vehicle scheduling};
	
	\node[draw=purple, align=center, text width=9.75cm, minimum height=1.25em, ultra thick] (all) at (-5cm,-2.25) {coupling constraints};
	\end{tikzpicture}}
	\caption{Structure of the integrated line planning, timetabling and vehicle scheduling problem with passenger routing.} \label{fig:structure}
\end{center}
\end{figure}

\subsection{Literature review:  planning in public transportation}
\label{sec-literature}

We start by providing a short review on literature for planning in public transportation.
The established sequential solution approach consists of network design, line planning,
timetabling, vehicle scheduling and crew scheduling (see, e.g., \cite{CedWil86,bussieck1997discrete,
desaulniers2007public,guihaire2010transit,LLB18}).
Note that other sequential approaches are also possible and under research. In \cite{PSSS17,bouman_et_al:OASIcs:2020:13142}, a vehicle schedule is computed before the timetable while in \cite{MicSch07}, a vehicle schedule is constructed first, followed by lines and a timetable. For a theoretical analysis of the different sequential solution approaches, see \cite{Sch16}. Here, we concentrate on line planning, passenger routing, periodic timetabling and vehicle scheduling
as the core stages. An overview on line planning is provided in \cite{Sch10b} and on (periodic) timetabling
in \cite{Nac98,Lieb06}. For vehicle scheduling we refer to \cite{bunte2009overview}. Research still carries on for each of these stages, see \cite{FHSS-CASPT18,SAHIN2020102726} for
extensions of line planning, \cite{BornLindRoth19,galli2018modern,Borndoerfer20} for new features and
procedures in periodic timetabling and \cite{liebchen18,Schmidt20,Guedes20} for work on vehicle
scheduling. We also refer to the references therein.
Besides research on robustness in public transport and work on case studies, integration of the mentioned planning stages is a current topic and the focus of this paper.
There exists the following
literature for integrating subsets of the stages; mostly in a heuristic manner. 

\paragraph{Integration of line planning and timetabling}
In \cite{liebchen2008linien} line segments are connected to lines during the timetabling stage in a
heuristic approach. 
Iterative approaches of alternately re-optimizing line plans and timetables are presented in \cite{burggraeve2017integrating,Goverde19}.
Exact integer programming models are presented in \cite{Rittner2009,kaspi2013service,Puerto20} and solved by a column generation approach, a cross entropy heuristic and a matheuristic, respectively.

\paragraph{Integration of timetabling and vehicle scheduling}
While we are interested in periodic timetabling,
most models for integrated timetabling and vehicle scheduling deal with \emph{aperiodic} timetables and solve
the resulting integrated problem heuristically. The timetable and the vehicle schedule are often optimized
iteratively, see \cite{guihaire2010transit, petersen2013simultaneous, schmid2015integrated, fonseca2018matheuristic}
but also with matheuristics in \cite{Carosi19} or by a bi-level model in \cite{yue2017integrated}.
A periodic version for integrating timetabling and vehicle scheduling can be found in \cite{van2021integrated} while in \cite{lindner2000train} periodic vehicle scheduling is
integrated indirectly by using an approximation of the costs as objective.
In \cite{narayan} periodic vehicle scheduling constraints are added to a periodic event scheduling
problem. Pareto solutions for integrating periodic timetabling with aperiodic
vehicle scheduling are identified in \cite{Heureka21}.

\paragraph{Integration of line planning, timetabling, and vehicle scheduling}
Integrating the three steps usually takes two objectives, namely travel time for the passengers and
costs of the public transport supply, into account. 
In \cite{liebchen2008linien,li2018metro}, simplified integrated models are presented by adding further constraints to the periodic and aperiodic timetabling problem, respectively. 
Heuristic approaches which are based on changing the order of the sequential problems are given in \cite{MicSch07,PSSS17} and generalized in \cite{Sch16}. Finally, \cite{PaeSchiSch18} only focuses on the costs of the
public transport supply with the minimal restriction that all passengers can travel.

\paragraph{Integration of passenger routing}
Line planning has been integrated with passenger routing for nearly 15 years, starting with \cite{ScSc06a}
including penalties for transfers. An efficient solution approach is provided by \cite{BGP07}, still, the
integrated problem can only be solved for small to medium sized instances.
In a capacitated setting it is especially difficult to guarantee that all passengers can travel on shortest paths, see \cite{goerigk2017line} for integer programming formulations and \cite{GSS14} for game theoretic approaches. 
The integration of timetabling
with passenger routing came up later, but has been researched intensively since this time, see \cite{Atmos2010-schmidt,siebert2013experimental,mariediss,SchmSch14,robenek2017hybrid,borndorfer2017passenger,SchiSch18,polinder2021timetabling}. These papers provide integer programming
models, an analysis and iterative solution approaches, while 
\cite{GGNS} uses a SAT formulation and a SAT-based solution approach.
\bigskip

The integration with network design is also a subject of current
research, while the integration of vehicle and crew scheduling is
advanced, see, e.g., \cite{HDD01} and already found entrance in
current professional planning tools.

\subsection{The sequential process for public transport optimization}\label{sec:seq_sol}
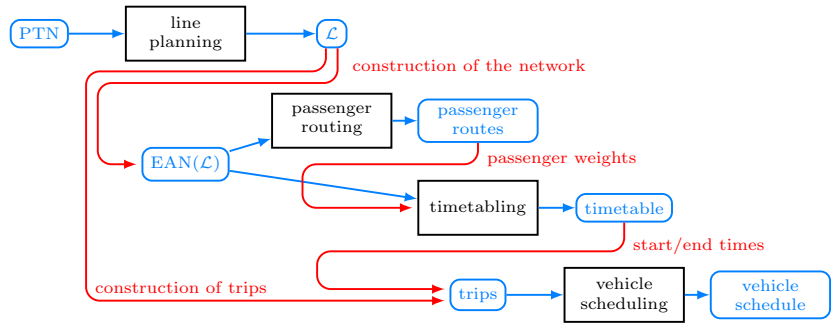
\begin{figure}	
	\begin{center}
	\resizebox{0.9\textwidth}{!}{
	\begin{tikzpicture}[thick, font=\scriptsize, scale=0.85]
	\node[draw, align=center, rounded corners, azure] (PTN) at (-11.25,3) {PTN};	
	
	\node[draw, align=center, text width=1.5cm, minimum height=2.25em] (L) at (-8.75cm,3) {line planning};
	
	\node[draw, align=center, rounded corners, azure] (cL) at (-6.25,3) {$\cL$};	
	
	\node[draw, align=center, rounded corners, azure] (EAN) at (-8.75,0.75) {EAN$(\cL)$};	
	
	\node[draw, align=center, text width=1.5cm, minimum height=2.25em] (P) at (-6.25cm,1.5) {passenger routing};
	
	\node[draw, align=center, rounded corners, azure, text width=1.5cm] (R) at (-3.75,1.5) {passenger routes};

	\node[draw, align=center, text width=1.5cm, minimum height=2.25em] (T) at (-3.75cm,0) {timetabling};
	
	\node[draw, align=center, rounded corners, azure] (Tim) at (-1.25,0) {timetable};	
	
	\node[draw, align=center, rounded corners, azure] (trips) at (-3.75,-1.5) {trips};			
	\node[draw, align=center, text width=1.5cm, minimum height=2.25em] (V) at (-1.25cm,-1.5) {vehicle scheduling};
	
	\node[draw, align=center, rounded corners, azure, text width=1.5cm] (VS) at (1.25,-1.5) {vehicle schedule};	
	
	\draw[-latex, azure] (PTN) to (L);
	\draw[-latex, azure] (L) to (cL);
	\draw[-latex, azure] (EAN) to (P);
	\draw[-latex, azure] (EAN) to (T);
	\draw[-latex, azure] (P) to (R);
	\draw[-latex, azure] (T) to (Tim);
	\draw[-latex, azure] (trips) to (V);
	\draw[-latex, azure] (V) to (VS);
	
	  \draw[-latex,rounded corners=0.2cm,shorten >=2pt, red] ([shift={(0.1,0)}]cL.south) -- (-6.15,2.15)--(-10.25,2.15)--(-10.25,0.75)-- (EAN);
	  \draw[-latex,rounded corners=0.2cm,shorten >=2pt, red] ([shift={(-0.1,0)}]cL.south) -- (-6.35,2.35)--(-10.45,2.35)--(-10.45,-1.6)-- ([shift={(0,-0.1)}]trips.west);
	  \draw[-latex,rounded corners=0.2cm,shorten >=2pt, red] (R) -- (-3.75,0.75)--(-6.75,0.75)--(-6.75cm,0)-- (T);
	  \draw[-latex,rounded corners=0.2cm,shorten >=2pt, red] (Tim) -- (-1.25,-0.75)--(-6.5,-0.75)--(-6.5,-1.4)-- ([shift={(0,0.1)}]trips.west);
	  
	  \node[red, align=left, anchor=west] at (-6.05,2.45) {construction of the network};
	  \node[red, align=left, anchor=west] at (-3.75,0.85) {passenger weights};
	  \node[red, align=left, anchor=west] at (-1.25,0-.65) {start/end times};
	  \node[red, align=left, anchor=west] at (-10.45,-1.4) {construction of trips};
	\end{tikzpicture}}
	\caption{
Dependencies of the sequential problems line planning, passenger routing, timetabling and vehicle scheduling problem.} \label{fig:sequential_dependence}
\end{center}
\end{figure}

In this section we consider our sequential stages in public transport optimization, namely
  line planning, passenger routing, timetabling, and
  vehicle scheduling. The goal is to formulate these stages as a sequential process in order to
  receive the corresponding multi-stage problem (MSP).
  \medskip
  
For each of the single stages in public transport
optimization, integer programming formulations are known. On a first glance, it seems to be
easy to put these together to an integrated formulation (MSP). However, for doing so the
integer programs of the single stages have to be reformulated such that the decisions made in
previous stages appear as parameters in subsequent stages.
This can be illustrated when looking at Figure~\ref{fig:sequential_dependence}:
For example, the first stage,
line planning, outputs a set of lines ${\cal L}$. This set of lines is used to build
the event-activity network (EAN) which is the input for the second stage (passenger
routing) and for the third stage (timetabling). The classic formulations for timetabling and passenger
routing assume that the actual event-activity network is given and fixed. 
For developing an integrated formulation we do not construct the event-activity network explicitly
but reformulate the passenger routing and timetabling problem such that the decision variables from
the line planning stage model implicitly make sure that the correct event-activity problem is used.
\medskip

An overview on the notation 
of the following section is summarized in Table~\ref{tab:param:all}.
For denoting in which planning stages the parameters are relevant, we abbreviate line planning as Lin, passenger routing as Pass, timetabling as Tim and vehicle scheduling as Veh.  The parameters are described in the respective sections. 

\begin{table}[h]
{\small    
\begin{center}
\begin{tabular}{p{1.6cm}p{7.8cm}p{1cm}}
\toprule
\textbf{Notation} & \textbf{Explanation} & \textbf{Stage }\\ 
\midrule
$(V,E)$ & public transport network, $V$ stops, $E$ direct connections &  \\
\midrule
$\femin, \femax$ & lower/upper frequency bounds on edge $e\in E$ & Lin \\
$\cL^0$ & line pool, set of lines to chose from & Lin \\ 
$\Cost_l$ & cost of line $l$ & Lin  \\
\midrule
$C_{u,v}$ & passenger demand between stop $u \in V$ and $v \in V$ & Pass \\ 
\midrule
$L_e^{\drive}, U_e^{\drive}$ & lower/upper bound on the travel time on edge $e \in E$ &  Tim\\ 
$L_v^{\wait}, U_v^{\wait}$ & lower/upper bound on the dwell time in stop $v \in V$ & Tim \\
$L_v^{\trans}, U_v^{\trans}$ & lower/upper bound on the transfer time in stop $v \in V$ & Tim \\
$T$ & length of the planning period & Tim \\ 
\midrule
$\cP$ & set of periods considered for vehicle scheduling & Veh \\ 
\bottomrule
\end{tabular}
\end{center}
\caption{Parameters/data needed for the public transport problems.} \label{tab:param:all}
}
\end{table}

Let a public transportation network (PTN) $(V,E)$ be given consisting of stops $V$ and direct
connections (e.g.~tracks) $E$ between the nodes. Furthermore, let $C_{u,v}$ be the passenger demand 
for each origin-destination pair $(u,v) \in\OD \subseteq V \times V$.
The goal is to find a line plan, a timetable, passenger routes
  and a vehicle schedule such that a weighted sum of
  the costs and the travel time
  of the passengers is as small as possible. As travel time we focus on
  the \emph{perceived} travel time which includes a penalty for every
  transfer. The costs consist of distance-based costs and time-based costs and the number of vehicles needed for operating the vehicle schedule.

\subsubsection{Stage 1: Line planning (Lin)}\label{sec:seq:line_planning}
Line planning is the first stage of the planning process in public transportation
considered here. For the first stage, no reformulation is necessary,
we can just take any of the known formulations.
Here, we use binary variables $y_l$, $l \in \cL^0$, determining which lines from a given line
pool are operated. $(\seq_1)$ is given as:
        
\begin{alignat}{5}
		(\seq_1) \quad \min \; f_1(y) && :=\sum_{l \in \cL^0} y_l \cdot \Cost_l	&&& \nonumber \\ 	
	 	 \st && \sum_{\substack{l \in \cL^0 \colon e \in l}} y_l & \geq  \femin &&\qquad e \in E \label{L1} \\
	 	&& \sum_{\substack{l \in \cL^0 \colon e \in l}} y_l & \leq  \femax && \qquad e \in E \label{L2}\\
	 	&& y_l & \in \{0,1\} && \qquad l \in \cL^0 \nonumber
\end{alignat}   
with $F_1:=\{y \in \R^{|\cL^0|}: \eqref{L1},\eqref{L2} \mbox{ are satisfied}\}$.

The lower and upper edge frequency bounds $\femin, \femax$ for every edge $ e \in E$
make sure that all passengers can be transported and that the capacity on the edges is satisfied.
The objective minimizes the costs for the infrastructure provider where
$\Cost_l$ approximate the cost for operating line $l \in \cL^0$.

\subsubsection{Stage 2: Passenger routing (Pass)} 
Passenger routing is a shortest path problem in the so-called
  \emph{event-activity network (EAN)} $(\cE,\cA)$. It consists of
  \begin{itemize}
    \item events $\cE$ which represent arrivals and departures of lines at stations,
    \item activities $\cA$ representing drive activities $\cA_{\drive}$
      between stations, wait activities $\cA_{\wait}$ at stations and transfer activities $\cA_{\trans}$ which
      allow passengers to change between lines.
    \end{itemize}
    
For modeling passenger routing, we also need origin and destination events. If these are included, we call the extended event-activity network $(\bar{\cE},\bar{\cA})$.   
For a formal definition, see Notations~\ref{nota-EAN-formal} and \ref{nota-EAN-routing} in \ref{app-EAN}. We assume that lower bounds $L_a$ on the activity durations can be derived from the PTN (see \eqref{explain-bounds} in the appendix
and Table~\ref{tab:param:all})
and are interested in a shortest path w.r.t.\ these weights $L_a$.
\medskip

Finding shortest paths in a given network is a well-known problem. Here, we use the
classic flow formulation with variables $p_a^{u,v}$ for determining whether activity
$a \in \cA$ is part of a
shortest path from $u$ to $v$. The flow conservation constraints are typically written
as $A p^{u,v}=b^{u,v}$ where $A$ is the node-arc-incidence matrix
of the underlying network (the complete formulation is provided in~\ref{sec:app:passenger_routing}).
\medskip

In our case, the network depends on the lines chosen in stage~1. As underlying network
we nevertheless take the event-activity network which contains all potential lines from the line pool ${\cal L}_0$
but use the first-stage variables $y$ as parameters for stage~2 to make implicitly sure that the
correct network is considered. Let us collect the wait and drive activities
of line $l$ in the set $\cA(l)$. Constraints
\[ p_a^{u,v} \leq  \bar{y}_l \quad  (u,v) \in \OD, a \in \cA(l) \]
then guarantee that passengers of OD pair $(u,v)$ can only be routed on activities $a$
which belong to a line $l$ chosen in stage~1. The formulation of the second-stage problem hence
becomes

\begin{alignat}{7}
&&(\seq_2(\bar{y})) \quad \min f_2(\bar{y};p)&&&:=\sum_{(u,v) \in \OD} && C_{u,v} \cdot \sum_{a \in \cA}  && p_a^{u,v}  \cdot L_a  &&\nonumber \\
&&\st \quad  & &&A \cdot ({p}^{u,v}) && =  b^{u,v} &&\quad (u,v) \in \OD \label{P1}\\
& && &&{p}_a^{u,v} && \leq  \bar{y}_l &&\quad  (u,v) \in \OD, a \in \cA(l) \label{eq-LP1}\\
& && &&{p}_a^{u,v} && \in \{0,1\} && \quad  (u,v) \in \OD, a \in \bar{\cA} \nonumber
\end{alignat}
with $F_2(\bar{y}):=\{p_a^{u,v} \in \{0,1\}: a \in \bar{\cA}, (u,v) \in \OD, (\ref{P1}),(\ref{eq-LP1}) \mbox{ are satisfied}\}$.
\medskip

$A$ is the node-arc incidence matrix of the event-activity network with respect to the
line pool ${\cL^0}$ and $b^{u,v}$ contains the demand vector of the flow conservation
constraints, i.e., constraint \eqref{P1} ensures correct passenger flows.
The objective minimizes the sum of lower bounds $L_a$ of the travel times over
all drive and wait activities as approximation of the travel times. The travel time can be generalized to the perceived travel time by adding a penalty for each transfer. We do not add these transfer penalties here to not further complicate the notation.
We do not consider vehicle capacities for routing passengers but assume that the capacity is always sufficient such that passengers can travel on a shortest path. For settings with high demand it would be necessary to include capacity constraints into the routing process as in \cite{burggraeve2017integrating,goerigk2017line,PaeSchiSch18,Goverde19}.

\subsubsection{Stage 3: Timetabling (Tim)}\label{sec:seq:timetabling} 
We consider \emph{periodic} timetabling, i.e., the timetable is repeated every $T$ minutes,
e.g., every hour. The underlying model is the \emph{periodic event scheduling problem (PESP)}
in the event-activity network which we already used in stage~2 for determining the passenger routes.
For details we refer again to Notation~\ref{nota-EAN-formal} in \ref{app-EAN}.
The integer programming model for periodic timetabling uses
variables $\pi_i$ which contain the arrival/departure time of event $i$ for each event
in the EAN and so-called modulo parameters $z_a \in \Z$ for all $a \in \cA$ to model
the periodicity of the timetable. The basic PESP constraint is
\begin{equation}
  \label{tt-basic}
  L_a \leq \pi_j - \pi_i + z_a \cdot T  \leq  U_a \mbox{ for all } a=(i,j) \in \cA
\end{equation}  
where besides the lower bounds $L_a$ from passenger routing we also use bounds upper bounds
$U_a$ and $L_a$ on the duration of activity $a$. (Both can be extracted from the PTN, see
again \eqref{explain-bounds}.)
The objective is to minimize the sum of travel times along the activities. We get
\begin{equation}
  \label{tt-objective}
  \sum_{a \in \cA}  w_a \cdot (\pi_j - \pi_i + z_a  \cdot T) 
\end{equation}

where the given weights $w_a$ represent the number of passengers using activity $a$,
see \eqref{PESP} in the appendix. There are dependencies to stage~1 and to stage~2:
\begin{itemize}
\item First, a timetable is only needed for
events which exist, i.e., which belong to lines which have been chosen in stage~1.
In order to make this dependency explicit we introduce binary variables
$\eta_a=y_{l_1} \cdot y_{l_2}$ 
for every activity $a=(i,j)\in \cA(l_1,l_2)$, i.e.,
with event $i$ being in line $l_1$ and event $j$ in line $l_2$.
Hence, $\eta_a=1$ if and only if both endpoints of activity $a$ belong to lines which have been selected in
stage~1. Adding $\eta_a$ to \eqref{tt-basic} results in
\begin{equation}
  \label{tt-new}
  L_a \cdot \eta_a  \leq \pi_j - \pi_i + z_a \cdot T  \leq  U_a + M \cdot (1-\eta_a)
  \mbox{ for all } a=(i,j) \in \cA
\end{equation}  
and makes (for $M$ sufficiently large) sure that the timetable constraints need only be satisfied if both of the
considered events belong to lines that have been selected in stage~1.
\item Second, the weights $w_a$ in the objective function depend on the
  passenger routes of stage~2. Using the variables $p_a^{u,v}$ of stage~2,
  we can compute them by 
  \[
    w_a= \sum_{\substack{(u,v) \in \OD:\\ p_a^{u,v}=1}} C_{u,v}.
  \]
\end{itemize}
In the objective function of the third-stage problem we hence plug in the definition of $w_a$
in \eqref{tt-objective} and replace the constraints \eqref{tt-basic} by \eqref{tt-new}.
We receive:
\begin{alignat}{5}
&& (\seq_3(\bar{y},\bar{p})) \ \ \min f_3(\bar{y},\bar{p};\pi,z) &:=  \sum_{a \in \cA} \sum_{(u,v) \in \OD}  \bar{p}_a^{u,v}\cdot   && C_{u,v} \cdot (\pi_j - \pi_i +  z_a  \cdot T) \nonumber \\
&&\mbox{s.t.} \ \pi_j - \pi_i + z_a \cdot T & \geq  \eta_a \cdot L_a \ \  && a=(i,j) \in \cA \label{TT1}\\
&&\pi_j - \pi_i + z_a \cdot T & \leq   U_a + M \cdot (1-\eta_a) \ \  && a=(i,j) \in \cA \label{TT2}\\
&& \eta_a & =  \bar{y}_{l_1} \cdot \bar{y}_{l_2}  && a \in \cA(l_1,l_2) \ \ \label{TT3}\\
&& \pi_i & \in  \{0, \ldots, T-1\} \quad  && i \in \cE \nonumber \\
&&z_a & \in  \Z \ && a \in \cA \nonumber \\
&&\eta_a & \in  \{0,1\} \ && a \in \cA \nonumber
\end{alignat}
with
\begin{eqnarray*}
  F_3(\bar{y},\bar{p})&:= &\{\pi \in \{0,\ldots,T-1\}^{|\cE|}, z \in \{0,1\}^{|\cA|} \colon \exists \eta \in \{0,1\}^{\cA} \colon \\
                      && (\ref{TT1}),(\ref{TT2}),(\ref{TT3}) \mbox{ are satisfied}\}.
\end{eqnarray*}
                         
Note that \eqref{TT3} can be easily linearized (see \eqref{tmp1}-\eqref{tmp3} in the appendix),
and that the auxiliary variables $\eta_a$ are not explicitly mentioned as variables of $F_3$.

\subsubsection{Stage 4: Vehicle scheduling (Veh)}
From the lines and the timetable we can determine \emph{trips}: These are journeys from
the start station of a line to its end station which have to be operated at the times
which have been fixed in the timetable. To obtain the trips we need to roll-out the timetable for a given set of periods $\cP$.
The goal of vehicle scheduling is to assign vehicles to these trips
such that all trips are operated with minimal costs.
The cost function includes costs for the vehicles needed,
for the (empty) distance driven, and the time the vehicles are used.
Vehicle scheduling can be modeled as a flow problem using variables $x_{\tau_1,\tau_2}$
which are 1 if trips $\tau_1, \tau_2$ are operated by the same vehicle directly after each other.
The formulation can be found in \eqref{VEHSCHED} in \ref{sec:app:veh}.
\medskip

Also here, we have dependencies to former stages, since the trips depend on the lines
determined in stage~1 and on the timetable determined in stage~3, see Figure~\ref{fig:structure}.
Similar as before,
these dependencies can be made explicit by using the line-choice
variables $y$ of stage~1 and the timetable $\pi$ of stage~3. The resulting IP formulation
needs further auxiliary variables, namely the start and end times $\Start_{\tau}$ and $\End_{\tau}$
and the duration $\dur_{\tau}$ of the trips. These variables can be directly determined from the variables of the
former stages and are hence not needed explicitly. The resulting IP formulation
\eqref{TV1} - \eqref{var2} is given in \ref{sec:app:veh}.
In abstract form it reads as
\begin{eqnarray*}
\min \quad && f_4(\bar{y}, \bar{p}, \bar{\pi}, \bar{z}; x)\\ 
&& \st  \quad x \in F_4(\bar{y},\bar{p},\bar{\pi},\bar{z})
\end{eqnarray*}
where $F_4(\bar{y},\bar{p},\bar{\pi},\bar{z})$ describes the set of feasible vehicle schedules
with respect to the decisions made in the former stages.

\subsection{An integrated model for line planning, timetabling, vehicle scheduling and
  passenger routing}
\label{sec:model}

We finally can put the sequential formulations of Section~\ref{sec:seq_sol}
together and receive an integer program for \msp:

\begin{align*}
	\min \; \lambda_\TT \cdot f_3(y,p,\pi,z) & + \lambda_\cost \cdot f_4(y,p,\pi,z,x)\\
	\st \quad y & \in F_1\\
	p & \in F_2(y)\\
	(\pi,z) & \in F_3(y,p)\\
	x & \in F_4(y,p,\pi,z)
\end{align*}

The IP formulation is depicted in a schematic way in Figure~\ref{fig:structure}.
Note that the objectives $f_1$ and $f_2$ are only auxiliary objectives, as the line costs $f_1$ are a first approximation of the costs $f_4$ and the travel time $f_2$ according to the lower bounds is an approximation of the travel time $f_3$. We therefore only evaluate the objectives $f_3$ and $f_4$ in the integrated problem.

\section{The price of sequentiality}
\label{sec-PoS}
 
In order to analyze the differences of the sequential and the integrated solution,
we use the \emph{price of sequentiality} based on \cite{philinediss}. It quantifies
how well \msp{} is approximated by the sequential solution approach.
Similar ideas can be found in \cite{mariediss},
in \cite{borndorfer2017passenger} for integrating passenger routing and timetabling and
in \cite{kiddeuro2018,archetti2016inventory} for supply chain management.

\begin{definition}
	Let $(x_1^*,\ldots, x_n^*)$ be optimal for \msp{} with $f(x_1^*, \ldots, x_n^*) >0$ and let $\bar{x}_i$ be optimal for $(\seq_i(\bar{x}_1,\ldots,\bar{x}_{i-1}))$, $i \in \{1, \ldots, n\}$. Then the \emph{price of sequentiality} is defined as
	\[PoS = \frac{f(\bar{x}_1, \ldots, \bar{x}_n) - f(x_1^*, \ldots, x_n^*)}{f(x_1^*, \ldots, x_n^*)}.\] 
\end{definition}

Note that the sequential solution and therefore the price of sequentiality depends on the algorithms  ${\aA}_1,\ldots,{\aA}_n$ used in the sequential solution approach.
Due to Lemma~\ref{Int-better-than-Seq}, the price of sequentiality is always positive.
For the remainder of the paper, we assume that the optimal objective value of the integrated problem
satisfies $f(x^*)>0$.

Note that $PoS=0$ means that an optimal solution of the integrated problem can be found by the sequential solution approach.  
As the sequential approach is a Greedy-like approach that carries out its
major steps in a predefined order, 
the definition of the sequential problems including their objective functions and the order in which they
are solved can be very important.
If such a greedy algorithm is known to be
optimal, the sequential solution approach
hence finds an optimal solution when the order is chosen accordingly. 
This is the case for special problems, e.g., for the matroid problem (see \cite{philinediss} for details defining a sequential process in the case of matroid optimization), but also for shortest paths in acyclic graphs or if the sequential problems are independent, i.e., when for all $i$ the variables $x_i$ are only part of feasibility constraints $F_i$ and objective functions $f_i$.

In general, the price of sequentiality can be unbounded even for two linear programs with $\lambda = (1,1)^t$ as shown in the following example.

\begin{example}\label{ex:pos_unbounded}
Consider the sequential problems
\begin{align*}
	(\seq_1) \quad &  \{\min x_1 \colon x_1 \leq 1, x_1 \geq 0\}\\
	(\seq_2(\bar{x}_1)) \quad & \{\min  x_2 \colon x_2 \leq 1 -\bar{x}_1, x_2 \geq 1 - N \cdot \bar{x}_1,  x_2 \geq 0\}.
\end{align*}
For $N\geq 0$, the optimal objective value of the sequential solution approach is $1$. However, for the integrated problem with $\lambda = (1,1)^t$, i.e., for
\[\msp \quad \{\min x_1+x_2 \colon x_1 +x_2 \leq 1, N\cdot x_1 + x_2 \geq 1, x_1, x_2 \leq 1, x_1, x_2 \geq 0\}\]
the optimal objective value is $\frac{1}{N}$. Thus, the price of sequentiality satisfies
\[PoS = \frac{1- \frac{1}{N}}{\frac{1}{N}} = N-1 \overset{N \to \infty}{\to} \infty.\]
\end{example}

For practical applications where the integrated problem cannot be solved in reasonable time, it is interesting to
  know if \PoS\ is bounded. Often it is much easier to compute the price of sequentiality for each of the objective functions $f_i$ separately. We can then use these bounds to construct a bound on the PoS for any parameter $\lambda=(\lambda_1, \ldots, \lambda_n)$ with $\lambda_i  \geq 0$ for all $i \in \{1, \ldots, n\}$.

\begin{theorem}\label{thm:bound-pos-single-objective}
	Let $(\seq_i(\xbari{i-1}))$, $i \in \{1, \ldots, n\}$, be a family of sequential problems with sequential solution $\bar{x}$ and let $x^*$ be an optimal solution of the corresponding multi-stage problem $\msp$ for $\lambda_i \geq 0$, $i \in \{1, \ldots, n\}$. Let $x^i$ be an optimal solution of $\msp$ with objective function $f_i$, i.e., for $\lambda_i'=\lambda_i$, $\lambda_j'=0$ for all $j \neq i$ and let $f_i(x^i)>0$. Let
	\[\PoS^i := \frac{f_i(\xbari{i})-f_i(x^i_1, \ldots x_i^i)}{f_i(x^i_1, \ldots x_i^i)} \quad \mbox{ for } i \in \{1, \ldots, n\}.\]
	Then 
	\[\PoS = \frac{f(\bar{x})-f(x^*)}{f(x^*)} \leq 
	\max_{\substack{i \in \{1, \ldots, n\}:\\ \lambda_i>0}} \PoS^i.\]
In particular, $\PoS$ is bounded if all $\PoS^i$ are.
\end{theorem}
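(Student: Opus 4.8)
The plan is to bound each objective's contribution to $f(\bar x)$ by comparing it, term by term, to the corresponding single-objective optimum, and then assemble these bounds. The key quantities are $f_i(\bar x)$ (the $i$-th objective evaluated at the sequential solution) and $f_i(x^i)$ (the best achievable value of the $i$-th objective alone over the feasible set of $\msp$). First I would rewrite the definition of $\PoS^i$ as the inequality
\[
f_i(\bar x_1,\ldots,\bar x_i) \le (1+\PoS^i)\, f_i(x^i_1,\ldots,x^i_i),
\]
valid for each $i$ with $\lambda_i>0$ (for $\lambda_i=0$ that term drops out of every objective we care about).

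Next I would establish the two-sided comparison that drives everything. On the one hand, since $x^i$ is a feasible point of $\msp$ optimal for the \emph{single} objective $f_i$, it minimizes $f_i$ over the feasible region, so in particular $f_i(x^i) \le f_i(x^*)$, where $x^*$ is the optimal integrated solution for the full weighted objective. On the other hand, I want an upper bound on $f(\bar x)$. Writing $f(\bar x)=\sum_{i:\lambda_i>0}\lambda_i f_i(\bar x)$ and applying the $\PoS^i$ inequality followed by $f_i(x^i)\le f_i(x^*)$ gives
\[
f(\bar x)\;\le\;\sum_{i:\lambda_i>0}\lambda_i(1+\PoS^i)\,f_i(x^i)
\;\le\;\sum_{i:\lambda_i>0}\lambda_i\Bigl(1+\max_{j:\lambda_j>0}\PoS^j\Bigr)f_i(x^*),
\]
where in the last step I bound each $\PoS^i$ by the maximum and use $f_i(x^i)\le f_i(x^*)$. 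Factoring out the constant $(1+\max_j \PoS^j)$ leaves $\sum_i \lambda_i f_i(x^*)=f(x^*)$, so $f(\bar x)\le (1+\max_j\PoS^j)f(x^*)$. Dividing by $f(x^*)>0$ and subtracting $1$ yields exactly $\PoS\le \max_{j:\lambda_j>0}\PoS^j$, and boundedness of all $\PoS^i$ then gives boundedness of $\PoS$.

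The step I expect to be the main obstacle — and the one I would check most carefully — is the inequality $f_i(x^i)\le f_i(x^*)$. This is where the nonnegativity $f_i>0$ and the fact that $x^i$ and $x^*$ are both feasible for the \emph{same} multi-stage feasible region matters: $x^i$ is by definition a minimizer of $f_i$ over that region, so it can only do better than $x^*$ on $f_i$. I would make sure the definitions in the theorem genuinely give $x^i$ as an optimizer over the full feasible set of $\msp$ (not over some restricted set), since the whole chain collapses if that comparison fails. A secondary subtlety is handling the indices with $\lambda_i=0$: these contribute nothing to either $f(\bar x)$ or $f(x^*)$, so they may be silently dropped throughout, and the maximum is correctly taken only over $\{i:\lambda_i>0\}$. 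Everything else is an arithmetic rearrangement of the weighted sum, so no further delicate estimates should be needed.
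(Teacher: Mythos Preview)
Your proposal is correct and follows essentially the same approach as the paper: both arguments hinge on the key inequality $f_i(x^i)\le f_i(x^*)$ (equivalently, $f(x^*)\ge\sum_{i\in\cI}\lambda_i f_i(x^i)$) together with the positivity assumption $f_i(x^i)>0$. The only cosmetic difference is that the paper replaces $f(x^*)$ by $\sum_{i\in\cI}\lambda_i f_i(x^i)$ in both numerator and denominator of $\PoS$ and then reads off a weighted average of the $\PoS^i$, whereas you bound $f(\bar x)$ term by term via $(1+\PoS^i)f_i(x^i)\le(1+\max_j\PoS^j)f_i(x^*)$; the underlying estimates are identical.
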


\begin{proof}
	Let $\cI:=\{i \in \{1, \ldots, n\} \colon \lambda_i > 0\}$ be the index set of subproblems with $\lambda_i>0$. As $f(x^*)>0$ by assumption, there is a $k \in \{1, \ldots, n\}$ with  $\lambda_k>0$, i.e., $\cI \neq \emptyset$.
	We know
	\begin{align*}
		f(\bar{x}) &= \sum_{i=1}^n \lambda_i \cdot f_i(\xbari{i})  = 
		\sum_{i \in \cI}\lambda_i \cdot f_i(\xbari{i}) \\
		f(x^*) &= \sum_{i=1}^n \lambda_i \cdot f_i(x^*_1, \ldots, x^*_i)\\ 
		& \geq \sum_{i=1}^n \lambda_i \cdot f_i(x^i_1, \ldots, x^i_i)\\ 
		& =  \sum_{i\in \cI} \lambda_i \cdot f_i(x^i_1, \ldots, x^i_i)>0.
	\end{align*}
	Thus 
	\begin{align*}
		\frac{f(\bar{x})-f(x^*)}{f(x^*)} 
		&\leq \frac{\sum_{0i \in \cI} \lambda_i \cdot f_i(\xbari{i})
					- \sum_{i \in \cI} \lambda_i \cdot f_i(x^i_1, \ldots x_i^i)}
					{\sum_{i \in \cI} \lambda_i \cdot f_i(x^i_1, \ldots x_i^i)}\\
		& = \frac{\sum_{i \in \cI} \lambda_i \cdot f_i(x^i_1, \ldots x_i^i) \cdot \PoS^i}
					{\sum_{i \in \cI} \lambda_i \cdot f_i(x^i_1, \ldots x_i^i)}\\
		& \leq \max_{i \in \cI} \PoS^i.
	\end{align*}
\end{proof}

We turn to our example in public transport. Here, for the integrated line planning, passenger routing, timetabling and vehicle scheduling problem, the price of sequentiality is bounded for every feasible solution
\begin{itemize}
	\item when only the passengers' travel time $f_\TT$ is considered and 
	\item when only the number of vehicles is considered.
\end{itemize}

\begin{lemma}\label{lem:tim}
	Consider \lintimveh{} for $\lambda_{\TT} =1$ and $\lambda_{\cost}=0$. Let $\femin >0$, $e \in E$, and let bounds $L_a, U_a$, $a \in \cA$, be given.
	
	Let $U_a-L_a \leq T-1$, $a \in \cA$, $L_a \geq 1$ for all $a \in \cA$ with $L_a \neq U_a$. If a sequential solution exists, the price of sequentiality (when minimizing the travel time $f_\TT$) is bounded by $\max \{T-1, \max_{w \in V} \{U_w^{\trans}, U_w^{\wait}\}\}$. 
\end{lemma}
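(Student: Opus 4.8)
The plan is to reduce the statement to a single objective and then to bound the realized travel time of the sequential solution activity by activity. Since $\lambda_{\cost}=\lambda_4=0$ and $\lambda_{\TT}=\lambda_3=1$, the only positive weight is $\lambda_3$, so the integrated objective is $f=f_3$. Applying Theorem~\ref{thm:bound-pos-single-objective} with $\cI=\{3\}$, the price of sequentiality equals $\PoS^3=\frac{f_3(\bar x)-f_3(x^3)}{f_3(x^3)}$, where $x^3$ is an integrated solution minimizing $f_3$. Writing $B:=\max\{T-1,\ \max_{w\in V}\{U_w^{\trans},U_w^{\wait}\}\}$, the goal therefore becomes the multiplicative bound $f_3(\bar x)\le (1+B)\,f_3(x^3)$.

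The engine of the proof is a per-activity estimate. Both $f_3(\bar x)$ and $f_3(x^3)$ are passenger-weighted sums of the realized activity durations $t_a=\pi_j-\pi_i+z_a\cdot T$, which are feasible and hence satisfy $t_a\in[L_a,U_a]$ on every used activity; in particular $t_a\le U_a$. I would split the used activities by type. For a drive or wait activity with $L_a\neq U_a$ the assumptions give $L_a\ge 1$ and $U_a\le L_a+(T-1)$, so $t_a\le U_a\le L_a+(T-1)\le (1+(T-1))\,L_a$, i.e.\ a relative excess of at most $T-1$; this is exactly where the first term of $B$ enters. For a transfer or wait activity at a station $w$ the realized duration is bounded by $U_w^{\trans}$ resp.\ $U_w^{\wait}$; since the relevant lower bound is $L_a\ge 1$, the absolute bound converts into the relative bound $t_a\le U_w^{\trans}\le U_w^{\trans}\,L_a$ (resp.\ $t_a\le U_w^{\wait}\,L_a$), producing the second term of $B$. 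Activities with $L_a=U_a$ contribute $t_a=L_a$ exactly. In every case $t_a\le(1+B)\,L_a$, and summing over the sequential routes yields $f_3(\bar x)\le (1+B)\,f_2(\bar y,\bar p)$, the lower-bound (shortest-path) travel time of the sequential routing.

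It then remains to compare $f_2(\bar y,\bar p)$ with $f_3(x^3)$, and this is the step I expect to be the main obstacle. The sequential routing $\bar p$ is a shortest path with respect to the lower bounds $L_a$, but only in the event-activity network of the cost-minimal line plan $\bar y$, whereas $x^3$ is free to use a different line plan and different routes; since cost is not penalized here, the integrated solution may select lines admitting passenger paths that are strictly shorter in the $L_a$-metric. Consequently the clean inequality $f_2(\bar y,\bar p)\le f_3(x^3)$ is \emph{not} immediate, and simply invoking optimality of $\bar p$ does not suffice. My plan to close this gap is to exploit that $\femin>0$ forces every edge of the PTN to be covered by some line in \emph{every} feasible line plan, so that any PTN path is realizable in both networks and the two routings can differ only in the number of forced transfers and the waits that replace them; the excess is then charged, transfer by transfer and wait by wait, to adjacent drive activities whose durations are at least $1$, which is precisely what keeps the overhead within the factor $1+B$. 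To obtain the tight constant (rather than a weaker multiple of $B$) I expect to additionally use the optimality of the sequential timetabling stage, so that transfer times are not realized at their worst-case values $U_w^{\trans}$ indiscriminately; the hypothesis that a sequential solution exists guarantees that this timetabling stage is feasible. Verifying that this charging argument aggregates correctly across all OD pairs, for arbitrary line plans satisfying the shared frequency constraints $F_1$, is the delicate part of the proof.
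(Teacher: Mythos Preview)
Your per-activity bound and the reduction to the single objective $f_3$ are fine, but the approach stalls exactly where you say it does: you try to chain $f_3(\bar x)\le (1+B)\,f_2(\bar y,\bar p)$ with $f_2(\bar y,\bar p)\le f_3(x^3)$, and the second inequality is the problem. The routes $\bar p$ are shortest only with respect to the lower bounds \emph{inside the EAN of the cost-optimal line plan $\bar y$}, whereas $x^3$ may use a different line plan whose EAN admits strictly shorter $L_a$-paths (fewer forced transfers, for instance). Your proposed charging argument---pairing each extra transfer or wait with an adjacent drive activity of duration at least $1$---does not obviously deliver the constant $B$: a sequential route may be forced to transfer at many consecutive nodes where the integrated route stays on a single line, and there are not enough ``adjacent'' drive activities to absorb all of them without reusing the same one. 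You sense this yourself when you call the step delicate.

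The paper avoids the detour through $f_2(\bar y,\bar p)$ altogether by working at the PTN level rather than comparing two EANs. For each OD pair $(u,v)$ fix a single PTN shortest path $R^{u,v}$ with edge weights $L_e^{\drive}$ and node weights $\min\{L_w^{\wait},L_w^{\trans}\}$. This path is a lower bound for $f_3(x^*)$ because \emph{any} EAN route, under any line plan and any feasible timetable, projects to a PTN walk of at least this length. On the other side, $\femin>0$ guarantees that a route tracing $R^{u,v}$ exists in the EAN of $\bar y$; evaluating that route with the upper bounds $U_e^{\drive}$ and $\max\{U_w^{\wait},U_w^{\trans}\}$ gives an upper bound for the sequential travel time on $(u,v)$. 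Because numerator and denominator of the PoS are now sums over the \emph{same} edges and nodes of $R^{u,v}$, the ratio reduces term by term to $(U-L)/L$, which is at most $T-1$ on edges (using $U_a-L_a\le T-1$ and $L_a\ge 1$) and at most $\max\{U_w^{\trans},U_w^{\wait}\}$ on nodes. The key idea you are missing is therefore not a charging scheme between two EAN routings, but a common PTN-level reference path against which both solutions are bounded directly.
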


\begin{proof}
	As the number of vehicles is unlimited and $\lambda_\cost=0$, we can construct a feasible vehicle schedule for any feasible line plan and timetable with passenger routes, e.g., by covering each trip by a single vehicle. We therefore only have to consider the line plan, passenger routes and timetable.
	
	Note that we can rewrite the objective as
	
	\begin{align*}
		& \sum_{(u,v) \in \OD} C_{u,v} \cdot \sum_{a=(i,j) \in \cA} 
		p_{a}^{u,v}\cdot (\pi_j - \pi_i + z_a \cdot T) \\
		 &= \sum_{(u,v) \in \OD} C_{u,v} \cdot \sum_{a \in \textup{SP}_{u,v}(\pi)} \textup{dur}_a(\pi)
	\end{align*}	
	
	where $\textup{SP}_{u,v}(\pi)$ is the shortest path for OD pair $(u,v)$ according to timetable $\pi$ and $\textup{dur}_a(\pi) = (\pi_j - \pi_i + z_a \cdot T)$, $a=(i,j) \in \cA$.	
        
	Recall that the bounds $L_a,U_a$, $a \in \cA$, are given according to the PTN,  
	see \eqref{explain-bounds} in the appendix and Table~\ref{tab:param:all}.
	 Therefore, the duration of a route of OD pair $(u,v)$ can be bounded from below
        by the length of a shortest $u-v$ path $R^{u,v}$ according to edge weights $L_{e}^{\drive}$ and node weights $\min\{L_w^{\wait},L_w^{\trans}\}$.
        
        This especially holds for the shortest route $\textup{SP}_{u,v}(\pi)$ for feasible timetable $\pi$. 
Conversely, an upper bound for the travel time is obtained by computing the length of $R^{u,v}$ using the upper bounds as the existence of a route corresponding to $R^{u,v}$ is given by $\femin > 0$, $e \in E$. Note that if $\femin=0$ for some PTN edge $e$, there can be optimal solutions that are not using edge $e$ resulting in a higher upper bound for the passenger route.
		
	Let $\bar{\xi}=(\bar{y},\bar{p},\bar{\pi},\bar{z},\bar{x})$ be an optimal solution of the sequential planning process and $\xi^*=(y^*,p^*,\pi^*,z^*,x^*)$ be an optimal integrated solution. Without loss of generality, we assume that $R^{u,v}$ does not contain an edge $e$ with $L_e^{\drive}=U_{e}^{\drive}=0$ or a node $w$ with $L_w^{\trans}=U_w^{\trans}=L_w^{\wait}=U_w^{\wait}=0$. Therefore, we get
	\begin{align*}
		PoS & = \frac{f(\bar{\xi})-f(\xi^*)}{f(\xi^*)} \\
		& \leq \frac{\sum\limits_{(u,v) \in \OD} C_{u,v} \cdot \sum\limits_{a \in \textup{SP}_{u,v}(\bar{\pi})} \textup{dur}_a(\bar{\pi})- \sum\limits_{(u,v) \in \OD} C_{u,v} \cdot \sum\limits_{a \in \textup{SP}_{u,v}(\pi^*)} \textup{dur}_a(\pi^*)}{\sum\limits_{(u,v) \in \OD} C_{u,v} \cdot \sum\limits_{a \in \textup{SP}_{u,v}(\pi^*)} \textup{dur}_a(\pi^*)}\\
		& \leq \frac{\sum\limits_{(u,v) \in \OD} C_{u,v} \cdot ( \sum\limits_{e \in R^{u,v}} U_e^{\drive} - L_e^{\drive}+\sum\limits_{w \in R^{u,v}} \max \{U_w^{\wait},U_w^{\trans}\} - \min \{ L_w^{\trans}, L_w^{\wait}\})}{\sum\limits_{(u,v) \in \OD} C_{u,v} \cdot ( \sum\limits_{e \in R^{u,v}} L_e^{\drive}+\sum\limits_{w \in R^{u,v}} \min \{ L_w^{\trans}, L_w^{\wait}\})}\\
		& \overset{(\star)}{\leq} \frac{\sum\limits_{(u,v) \in \OD} C_{u,v} \cdot ( (T-1) \cdot |\{e \in R^{u,v}\}| + \max\limits_{w \in V} \{U_w^{\trans},U_w^{\wait}\} \cdot |\{w \in R^{u,v}\}|)  
		}{\sum\limits_{(u,v) \in \OD} C_{u,v} \cdot ( |\{e \in R^{u,v}\}| +  |\{w \in R^{u,v}\}|)}\\
		&\leq \max \{T-1, \max_{w \in V} \{U_w^{\trans}, U_w^{\wait}\}\}
	\end{align*}
	
	where equation $(\star)$ holds as $L_a \geq 1$. 
\end{proof}

\begin{lemma}\label{lem:veh}
	Consider \lintimveh{} for $\lambda_{\TT} =0, \lambda_{\cost}=1$  where the costs $f_\cost$ only depend on the number of vehicles used. 

	If a sequential solution exists, the price of sequentiality (when minimizing the number of vehicles) is bounded by $|\cL^0|\cdot|\cP|-1$. 
\end{lemma}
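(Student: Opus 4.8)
The plan is to bound the numerator and the denominator of the price of sequentiality separately by elementary counting of trips and vehicles. Since the cost $f_\cost$ depends only on the number of vehicles used, both $f(\bar{\xi})$ and $f(\xi^*)$ are nonnegative integers. The standing assumption $f(\xi^*) > 0$ together with integrality then forces $f(\xi^*) \geq 1$, which takes care of the denominator.

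For the numerator I would bound the number of vehicles used by the sequential solution from above. The trips to be covered in stage~4 are obtained by rolling out the timetable over the period set $\cP$: each line selected in stage~1 gives rise to at most one trip per period, so the total number of trips is at most $|\cL^0| \cdot |\cP|$, independently of which lines stage~1 actually selects. Since any collection of trips admits the trivial vehicle schedule that assigns a separate vehicle to each trip, and the sequential vehicle schedule $\bar{\xi}$ is optimal for its stage-4 instance (hence uses no more vehicles than this trivial schedule), I obtain $f(\bar{\xi}) \leq |\cL^0| \cdot |\cP|$.

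Combining the two estimates yields
\[
\PoS = \frac{f(\bar{\xi}) - f(\xi^*)}{f(\xi^*)} \leq \frac{|\cL^0| \cdot |\cP| - 1}{1} = |\cL^0| \cdot |\cP| - 1,
\]
which is the claimed bound.

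Both ingredients are routine; the only step needing care is the trip-count estimate, namely verifying that rolling out the chosen line plan over the periods in $\cP$ produces at most $|\cL^0| \cdot |\cP|$ trips for every admissible line selection in stage~1. Once this counting is pinned down, together with the integrality lower bound $f(\xi^*) \geq 1$, the conclusion is immediate.
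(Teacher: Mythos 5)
Your proposal is correct and matches the paper's own argument: the paper likewise bounds the sequential cost by at most $|\cL^0|$ operated lines times at most $|\cP|$ vehicles per line (one vehicle per trip), and bounds the integrated optimum below by one vehicle, yielding $\PoS \leq (|\cL^0|\cdot|\cP|-1)/1$. Your derivation of the denominator bound from integrality plus the standing assumption $f(\xi^*)>0$ is just a slightly different phrasing of the paper's observation that any feasible solution needs at least one vehicle.
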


\begin{proof}
	In any feasible solution to \lintimveh, the number of lines operated is bounded by $|\cL^0|$. To operate a line for all periods $\cP$ at most $|\cP|$ vehicles are needed. On the other hand, at least one vehicle is needed to operate any feasible line plan.  For an optimal sequential solution $\bar{\xi}$ and an optimal integrated solution $\xi^*$ we therefore get
	\begin{align*}
		PoS = \frac{f(\bar{\xi})-f(\xi^*)}{f(\xi^*)} \leq \frac{|\cL^0|\cdot|\cP|-1}{1}.
	\end{align*}
\end{proof}

With Theorem~\ref{thm:bound-pos-single-objective} and Lemma~\ref{lem:tim} and \ref{lem:veh}, we can bound the price of sequentiality for \lintimveh{} under mild assumptions.

\begin{corollary}
	Consider \lintimpass{} for $\lambda_\TT, \lambda_\cost \geq 0$. Let $\femin >0$, $e \in E$, and let bounds $L_a, U_a$, $a \in \cA$, be given as in Lemma~\ref{lem:tim}. Let the costs $f_\cost$ be only determined by the number of vehicles. If a sequential solution exists, the price of sequentiality is bounded by 
	\[PoS \leq \max \{T-1, \max_{w \in V} \{U_w^{\trans}, U_w^{\wait}\}, |\cL^0|\cdot|\cP|-1\} .\]
\end{corollary}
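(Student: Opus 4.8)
The plan is to obtain the bound as a direct consequence of Theorem~\ref{thm:bound-pos-single-objective}, feeding in the two single-objective bounds established in Lemma~\ref{lem:tim} and Lemma~\ref{lem:veh}. Recall from the construction of the integrated model in Section~\ref{sec:model} that the auxiliary objectives $f_1$ and $f_2$ are not evaluated, so the integrated objective is $f = \lambda_\TT \cdot f_\TT + \lambda_\cost \cdot f_\cost$ with $f_\TT = f_3$ and $f_\cost = f_4$; equivalently $\lambda_1 = \lambda_2 = 0$. Hence the index set $\cI = \{i : \lambda_i > 0\}$ from the proof of Theorem~\ref{thm:bound-pos-single-objective} is contained in $\{\TT, \cost\}$, and I only have to control $\PoS^{\TT}$ and $\PoS^{\cost}$.

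First I would check that the hypotheses of Theorem~\ref{thm:bound-pos-single-objective} are met, in particular that the single-objective integrated optima have strictly positive value, so that $\PoS^{\TT}$ and $\PoS^{\cost}$ are well defined. Since $\femin > 0$ on every edge $e \in E$, every feasible solution must route the positive passenger demand on activities with $L_a \geq 1$, so the optimal travel time is positive; likewise any feasible line plan needs at least one vehicle, so the optimal vehicle count is positive. Note also that the sequential solution $\bar{x}$ is fixed by the algorithms $\aA_1, \ldots, \aA_n$ and does not depend on $\lambda$, so it is the same solution appearing in both lemmas and in the theorem. Next I would verify that the standing assumptions of each lemma hold under the hypotheses of the corollary: the bounds $L_a, U_a$ are assumed exactly as in Lemma~\ref{lem:tim}, the costs are assumed to be determined only by the number of vehicles as in Lemma~\ref{lem:veh}, a sequential solution is assumed to exist, and $\femin > 0$. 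Lemma~\ref{lem:tim} then gives $\PoS^{\TT} \leq \max\{T-1, \max_{w \in V}\{U_w^{\trans}, U_w^{\wait}\}\}$ and Lemma~\ref{lem:veh} gives $\PoS^{\cost} \leq |\cL^0| \cdot |\cP| - 1$.

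Finally I would combine these via Theorem~\ref{thm:bound-pos-single-objective}, which yields
\[
\PoS \;\leq\; \max_{i \in \cI} \PoS^i \;\leq\; \max\{\PoS^{\TT}, \PoS^{\cost}\} \;\leq\; \max\{T-1,\ \max_{w \in V}\{U_w^{\trans}, U_w^{\wait}\},\ |\cL^0| \cdot |\cP| - 1\},
\]
which is exactly the claimed bound. Passing from the maximum over $\cI \subseteq \{\TT,\cost\}$ to the maximum over the full three-term set can only enlarge the right-hand side, so the inequality holds whether one or both weights are positive. There is no genuine obstacle here beyond bookkeeping: the one point that needs care is confirming that the single-objective sub-problems whose prices of sequentiality are bounded in the two lemmas coincide exactly with the quantities $\PoS^{\TT}$ and $\PoS^{\cost}$ defined in Theorem~\ref{thm:bound-pos-single-objective} (the same single-objective integrated optima and the same fixed sequential solution $\bar{x}$), and that the positivity needed to make these ratios well defined is guaranteed by $\femin > 0$.
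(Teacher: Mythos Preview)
Your proposal is correct and follows exactly the approach the paper intends: the corollary is stated immediately after the sentence ``With Theorem~\ref{thm:bound-pos-single-objective} and Lemma~\ref{lem:tim} and~\ref{lem:veh}, we can bound the price of sequentiality\ldots'' and is given without a separate proof, so your combination of the two single-objective bounds via Theorem~\ref{thm:bound-pos-single-objective} is precisely what is meant. Your additional remarks on the positivity of the single-objective optima and on $\lambda_1=\lambda_2=0$ are accurate bookkeeping that the paper leaves implicit.
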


\section{Partial integration}\label{sec:partially_integrated}

Usually, the complexity of solving \msp{} is high, in particular if \msp{} is the integrated optimization
problem of a sequential process which consists of NP-complete sequential problems $(\seq_i)$.
This is in particular the case of public transport optimization where solving all sequential optimization problems in an integrated manner is not possible in reasonable time for realistically sized instances. Therefore, it might be beneficial to solve only some of the sequential problems in an integrated manner as part of the sequential solution process. This will be called
\emph{partial integration}. Partial integration is hence an intermediate step between the sequential solution approach and the integrated solution approach for \msp. We continue with the notation introduced in Section~\ref{sec-notation}.

\begin{definition} \label{def:partial}
  Let $(\seq_i\xbar)$, $i \in \{1, \ldots, n\}$, be a sequential process (as in Definition~\ref{def:sequential})
  and \msp\ be the corresponding integrated problem with $\lambda_1, \ldots, \lambda_n \geq 0$.
  Then the \emph{partially integrated problem} of stages $k$ to $l$, $1 \leq k \leq l \leq n$ is defined as
	\begin{align*}
          (\Int_{k,l}(\xbari{k-1})) \quad  \min\;
          & f_{k,l}(\xbari{k-1};x_k, \ldots, x_l)\\
          & = \sum_{i=k}^l \lambda_i \cdot f_i(\xbari{k-1},x_k,\ldots, x_{i-1}, x_i)\\
\st \quad x_k & \in F_k(\xbari{k-1})\\
	x_{k+1} & \in F_{k+1}(\xbari{k-1}, x_k)\\
			& \vdots \\
			x_l & \in F_l(\xbari{k-1},x_k, \ldots, x_{l-1}).
        \end{align*}
  \end{definition}

Analogously to Lemma~\ref{Int-better-than-Seq}    
        we receive

\begin{lemma}
\label{partial-Int-better-than-Seq}    
Let $(\bar{x}^*_k,\ldots,\bar{x}^*_l)$ be an optimal solution to 
$(\Int_{k,l}(\bar{x}_1,\ldots,\bar{x}_{k-1}))$ 
and let $\bar{x}_i$ be an optimal solution to
$(\seq_i(\bar{x}_1,\ldots,\bar{x}_{i-1}))$, $i\in\{k,\ldots,l\}$.

Then
$f_{k,l}(\bar{x}^*_k,\ldots,\bar{x}^*_l)  \leq
f_{k,l}(\bar{x}_k,\ldots,\bar{x}_l)$.
\end{lemma}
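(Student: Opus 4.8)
The plan is to mimic the proof of Lemma~\ref{Int-better-than-Seq}: I will argue that the sequential solution $(\bar{x}_k,\ldots,\bar{x}_l)$ is a \emph{feasible} point of the partially integrated problem $(\Int_{k,l}(\bar{x}_1,\ldots,\bar{x}_{k-1}))$, so that optimality of $(\bar{x}^*_k,\ldots,\bar{x}^*_l)$ for this minimization problem yields the claimed inequality at once. Thus the whole proof reduces to a feasibility check.

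First I would verify this feasibility stage by stage. By assumption each $\bar{x}_i$ is optimal, hence in particular feasible, for $(\seq_i(\bar{x}_1,\ldots,\bar{x}_{i-1}))$, which means $\bar{x}_i \in F_i(\bar{x}_1,\ldots,\bar{x}_{i-1})$ for every $i \in \{k,\ldots,l\}$. The key observation is that the constraints of $(\Int_{k,l}(\bar{x}_1,\ldots,\bar{x}_{k-1}))$ read $x_i \in F_i(\bar{x}_1,\ldots,\bar{x}_{k-1},x_k,\ldots,x_{i-1})$ for $i \in \{k,\ldots,l\}$. Substituting $x_j = \bar{x}_j$ for all $j \in \{k,\ldots,i-1\}$ turns the $i$-th constraint into $\bar{x}_i \in F_i(\bar{x}_1,\ldots,\bar{x}_{i-1})$, which is exactly the sequential feasibility condition already established. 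Hence $(\bar{x}_k,\ldots,\bar{x}_l)$ satisfies every constraint of the partially integrated problem.

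Since $(\bar{x}^*_k,\ldots,\bar{x}^*_l)$ is an optimal solution of the minimization problem $(\Int_{k,l}(\bar{x}_1,\ldots,\bar{x}_{k-1}))$ and $(\bar{x}_k,\ldots,\bar{x}_l)$ is a feasible point of that same problem, the optimal objective value cannot exceed the value attained at the sequential solution, giving $f_{k,l}(\bar{x}^*_k,\ldots,\bar{x}^*_l) \leq f_{k,l}(\bar{x}_k,\ldots,\bar{x}_l)$, as desired.

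I do not expect a genuine obstacle here, since the statement is the partial-integration analogue of Lemma~\ref{Int-better-than-Seq} and inherits its one-line character. The only point requiring care is the bookkeeping of the argument lists of the feasible sets $F_i$: one must check that the frozen prefix $(\bar{x}_1,\ldots,\bar{x}_{k-1})$ together with the substituted values $(\bar{x}_k,\ldots,\bar{x}_{i-1})$ reconstitutes precisely $(\bar{x}_1,\ldots,\bar{x}_{i-1})$, so that each partial-integration constraint collapses exactly onto the corresponding sequential-stage constraint. Once this index matching is made explicit, the inequality follows immediately.
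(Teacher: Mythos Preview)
Your proposal is correct and follows exactly the paper's approach: the paper's proof is the single line that $(\bar{x}_k,\ldots,\bar{x}_l)$ is feasible for $(\Int_{k,l}(\bar{x}_1,\ldots,\bar{x}_{k-1}))$, from which the inequality follows by optimality of $(\bar{x}^*_k,\ldots,\bar{x}^*_l)$. Your version merely spells out the index bookkeeping that the paper leaves implicit.
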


\begin{proof} The sequential solution $(\bar{x}_k,\ldots,\bar{x}_l)$ is feasible for
  $(\Int_{k,l}(\bar{x}_1,\ldots,\bar{x}_{k-1}))$,  hence the result follows.
\end{proof}
\medskip

      Let algorithms $\aA_1,\ldots,\aA_n$ for $(\seq_i)$ be known and an algorithm $\aA_{k,l}$ for $(\Int_{k,l})$,
        $1 \leq k \leq l \leq n$.
        We furthermore assume that 
\begin{itemize}
  \item for each $i\in\{1,\ldots,k-1\}$, $(\seq_i(\bar{x}_1, \ldots, \bar{x}_{i-1}))$ is feasible and $\aA_i$ finds an optimal solution $\bar{x}_i$,
  \item $(\Int_{k,l}(\bar{x}_1,\ldots,\bar{x}_{k-1}))$ is feasible and $\aA_{k,l}$ finds an optimal solution \linebreak $\PartOpt{k,l}:=(\bar{x}_k^*,\ldots,\bar{x}^*_l)$, 
  and that
  \item  for each $i\in\{l+1,\ldots,n\}$, $(\seq_i(\bar{x}_1, \ldots, \bar{x}_{k-1}, \bar{x}_k^*, \ldots, \bar{x}^*_l, \bar{x}_{l+1}, \ldots, \bar{x}_{i-1}))$ is feasible and $\aA_i$ finds an optimal solution $\bar{x}_i$.
\end{itemize}    
Then we call
\begin{eqnarray*}
  \Part{k,l} &:= &(\bar{x}_1,\ldots,\bar{x}_{k-1},\underbrace{\bar{x}_k^*,\ldots, \bar{x}^*_l,}_{{\rm optimal\ for}\ (\Int_{k,l})}\bar{x}_{l+1},\ldots, \bar{x}_n) \\
  & = & (\bar{x}_1,\ldots,\bar{x}_{k-1},\PartOpt{k,l},\bar{x}_{l+1},\ldots, \bar{x}_n)
\end{eqnarray*}        
a \emph{partially integrated solution} w.r.t.\ $(\Int_{k,l})$.
Note that $\Part{k,k}=\bar{x}$ and $\Part{1,n}=x^*$.

Concerning the notation, we remark that subindices refer to components of a solution,
  i.e., $x_i \in \R^{m_i}$ are the variables of the $i$th stage and $x_{k,l} \in \R^{m_k} \times \ldots \times \R^{m_l}$
  are the variables of stages $k$ to $l$. Conversely, superindices refer to complete solutions, i.e., $\bar{x}^{k,l}$ is
  a complete solution vector which has been built by partial integration as described in the algorithm below.

The corresponding solution approach is to solve stages $1$ to $k-1$ sequentially by algorithms ${\aA}_1, \ldots, {\aA}_{k-1}$, then use algorithm ${\aA}_{k,l}$ to solve
$(\Int_{k,l}(\xbari{k-1}))$, and to continue with solving stages $l+1$ to $n$ by algorithms ${\aA}_{l+1},\ldots,{\aA}_n$.
The partially integrated approach is shown in Figure~\ref{fig:partially_integrated}.
\smallskip

\begin{center}
\small{
\begin{tabular}{llll}
		\toprule
		\textbf{Problem} & \textbf{Algorithm} & \textbf{Input} & \textbf{Output}\\
		\midrule
 		$(\seq_1)$ & ${\aA}_1$ &  $\emptyset$ & $\bar{x}_1$\\
             & \vdots & \\
        $(\seq_{k-1})$ & ${\aA}_{k-1}$ &  $\bar{x}_1,\ldots,\bar{x}_{k-2}$ & $\bar{x}_{k-1}$\\
        $(\Int_{k,l})$ & ${\aA}_{k,l}$ &  $\bar{x}_1,\ldots,\bar{x}_{k-1}$ & $\bar{x}^*_{k,l}=(\bar{x}^*_{k},\ldots,\bar{x}^*_l)$\\
  $(\seq_{l+1})$ & ${\aA}_{l+1}$ &
$\bar{x}_1,\ldots,\bar{x}_{k-1},\bar{x}^*_{k,l}$ & $\bar{x}_{l+1}$\\
        & \vdots &  \\
  $(\seq_n)$ & ${\aA}_n$ &
  $\bar{x}_1,\ldots,\bar{x}_{k-1},\bar{x}^*_{k,l}, \bar{x}_{l+1},\ldots \bar{x}_{n-1}$ & $\bar{x}_n$\\
        \bottomrule
      \end{tabular}}
    \end{center}

	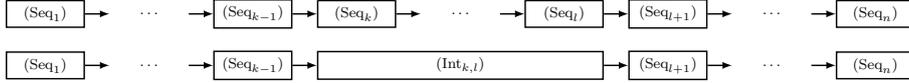
\begin{figure}[h]
	\begin{center}
	\resizebox{\textwidth}{!}{
		\begin{tikzpicture}[thick, scale=1, font= \footnotesize]
	\node[draw, rectangle, minimum width=1.5cm] (seq_1_1) at (0,0) {$(\seq_1)$};
	\node[minimum width=1.5cm] (seq_2_1) at (2,0) {$\ldots$};
	\node[draw, rectangle, minimum width=1.5cm] (seq_k1_1) at (4,0) {$(\seq_{k-1})$};
	\node[draw, rectangle, minimum width=1.5cm] (seq_k_1) at (6,0) {$(\seq_{k})$};
	\node[minimum width=1.5cm] (seq_k2_1) at (8,0) {$\ldots$};
	\node[draw, rectangle, minimum width=1.5cm] (seq_l_1) at (10,0) {$(\seq_l)$};
	\node[draw, rectangle,  minimum width=1.5cm] (seq_l1_1) at (12,0) {$(\seq_{l+1})$};
	\node[minimum width=1.5cm] (seq_l2_1) at (14,0) {$\ldots$};
	\node[draw, rectangle, minimum width=1.5cm] (seq_n_1) at (16,0) {$(\seq_{n})$};
	\node[draw, rectangle, minimum width=1.5cm] (seq_1_2) at (0,-1) {$(\seq_1)$};
	\node[minimum width=1.5cm] (seq_2_2) at (2,-1) {$\ldots$};
	\node[draw, rectangle, minimum width=1.5cm] (seq_k1_2) at (4,-1) {$(\seq_{k-1})$};
	\node[draw, rectangle, minimum width=5.5cm] (int_kl) at (8,-1) {$(\Int_{k,l})$};
	\node[draw, rectangle, minimum width=1.5cm] (seq_l1_2) at (12,-1) {$(\seq_{l+1})$};
	\node[minimum width=1.5cm] (seq_l2_2) at (14,-1) {$\ldots$};
	\node[draw, rectangle, minimum width=1.5cm] (seq_n_2) at (16,-1) {$(\seq_{n})$};
	\draw[-latex] (seq_1_1) to (seq_2_1);
	\draw[-latex] (seq_2_1) to (seq_k1_1);
	\draw[-latex] (seq_k1_1) to (seq_k_1);
	\draw[-latex] (seq_k_1) to (seq_k2_1);
	\draw[-latex] (seq_k2_1) to (seq_l_1);
	\draw[-latex] (seq_l_1) to (seq_l1_1);
	\draw[-latex] (seq_l1_1) to (seq_l2_1);
	\draw[-latex] (seq_l2_1) to (seq_n_1);
	\draw[-latex] (seq_1_2) to (seq_2_2);
	\draw[-latex] (seq_2_2) to (seq_k1_2);
	\draw[-latex] (seq_k1_2) to (int_kl);
	\draw[-latex] (int_kl) to (seq_l1_2);
	\draw[-latex] (seq_l1_2) to (seq_l2_2);
	\draw[-latex] (seq_l2_2) to (seq_n_2);
\end{tikzpicture}
		}
		\caption{Partial integration of stages $k$ to $l$ in a sequential solution approach.}\label{fig:partially_integrated}
	\end{center}
	\end{figure}

        We illustrate the meaning of partial integration using again public transport optimization. Various partially integrated solution approaches are presented in Figure~\ref{fig:ptp}. The sequential solution approach, depicted in the first row, refers to solving line planning, passenger routing, timetabling and vehicle scheduling separately as described in Section~\ref{sec:seq_sol}. The integrated solution approach of solving \lintimveh{} as described in Section~\ref{sec:model}, is depicted in the last row. The second row shows the partially integrated solution process
        w.r.t.\ \timpass{} or $(\Int_{2,3})$.
        The partially integrated solution approach w.r.t.\ line planning, timetabling and passenger routing as \lintimpass{} or $(\Int_{1,3})$
        is depicted in row~3.  Row~4 shows the partially integrated solution approach
w.r.t.\ \timveh{} or $(\Int_{3,4})$.
For detailed descriptions of these models, see \cite{philinediss}.

\begin{figure}
\begin{center}
\begin{center}
\resizebox{0.85\textwidth}{!}{
	\begin{tikzpicture}[thick, font=\scriptsize]
		\node[draw, align=center, color=gray, text width=1.5cm, minimum height=2.5em] (ND) at (-10,0) {network design};
		\node[draw, align=center, text width=1.5cm, minimum height=2.5em] (LP) at (-8,0) {line planning};
		\node[draw, align=center, text width=1.5cm, minimum height=2.5em] (PassEAN) at (-6,0) {passenger routing};
		\node[draw, align=center, text width=1.5cm, minimum height=2.5em] (TT) at (-4,0) {timetabling};
		\node[draw, align=center, text width=1.5cm, minimum height=2.5em] (VS) at (-2,0) {vehicle scheduling};
		\node[draw, align=center, color=gray, text width=1.5cm, minimum height=2.5em] (CS) at (0,0) {crew scheduling};
		\draw[-latex, color=gray] (ND) edge (LP);
		\draw[-latex] (LP) edge  (PassEAN);
		\draw[-latex] (PassEAN) edge (TT);
		\draw[-latex] (TT) edge (VS);
		\draw[-latex, color=gray] (VS) edge (CS);
		\node[draw, align=center, color=gray, text width=1.5cm, minimum height=2.5em] (1ND) at (-10,-1) {network design};
		\node[draw, align=center, text width=1.5cm, minimum height=2.5em,color=amber] (1LP) at (-8,-1) {line planning};
		\node[draw, align=center, text width=3.5cm, minimum height=2.5em,color=amber] (1TimPass) at (-5,-1) {\timpass};
		\node[draw, align=center, text width=1.5cm, minimum height=2.5em,color=amber] (1VS) at (-2,-1) {vehicle scheduling};
		\node[draw, align=center, color=gray, text width=1.5cm, minimum height=2.5em] (1CS) at (0,-1) {crew scheduling};
		\draw[-latex, color=gray] (1ND) edge (1LP);
		\draw[-latex,color=amber] (1LP) edge  (1TimPass);
		\draw[-latex,color=amber] (1TimPass) edge (1VS);
		\draw[-latex, color=gray] (1VS) edge (1CS);
		\node[draw, align=center, color=gray, text width=1.5cm, minimum height=2.5em] (2ND) at (-10,-2) {network design};
		\node[draw, align=center, text width=5.5cm, minimum height=2.5em, color=red] (2LinTimPass) at (-6,-2) {\lintimpass};
		\node[draw, align=center, text width=1.5cm, minimum height=2.5em, color=red] (2VS) at (-2,-2) {vehicle scheduling};
		\node[draw, align=center, color=gray, text width=1.5cm, minimum height=2.5em] (2CS) at (0,-2) {crew scheduling};
		\draw[-latex, color=gray] (2ND) edge (2LinTimPass);
		\draw[-latex, color=red] (2LinTimPass) edge (2VS);
		\draw[-latex, color=gray] (2VS) edge (2CS);
		\node[draw, align=center, color=gray, text width=1.5cm, minimum height=2.5em] (3ND) at (-10,-3) {network design};
		\node[draw, align=center, text width=1.5cm, minimum height=2.5em,color=deepmagenta] (3LP) at (-8,-3) {line planning};
		\node[draw, align=center, text width=1.5cm, minimum height=2.5em,color=deepmagenta] (3PassEAN) at (-6,-3) {passenger routing};
		\node[draw, align=center, text width=3.5cm, minimum height=2.5em,color=deepmagenta] (3TimVeh) at (-3,-3) {\timveh};
		\node[draw, align=center, color=gray, text width=1.5cm, minimum height=2.5em] (3CS) at (0,-3) {crew scheduling};
		\draw[-latex, color=gray] (3ND) edge (3LP);
		\draw[-latex,color=deepmagenta] (3LP) edge  (3PassEAN);
		\draw[-latex,color=deepmagenta] (3PassEAN) edge (3TimVeh);
		\draw[-latex, color=gray] (3TimVeh) edge (3CS);
		\node[draw, align=center, color=gray, text width=1.5cm, minimum height=2.5em] (4ND) at (-10,-4) {network design};
		\node[draw, align=center, text width=7.5cm, minimum height=2.5em, color=darkgreen] (4LTPV) at (-5,-4) {\lintimpass};
		\node[draw, align=center, color=gray, text width=1.5cm, minimum height=2.5em] (4CS) at (0,-4) {crew scheduling};
		\draw[-latex, color=gray] (4ND) edge (4LTPV);
		\draw[-latex, color=gray] (4LTPV) edge (4CS);
	\end{tikzpicture}}
      \end{center}
\caption{The different partially integrated solution approaches.}\label{fig:ptp}
\end{center}
\end{figure}
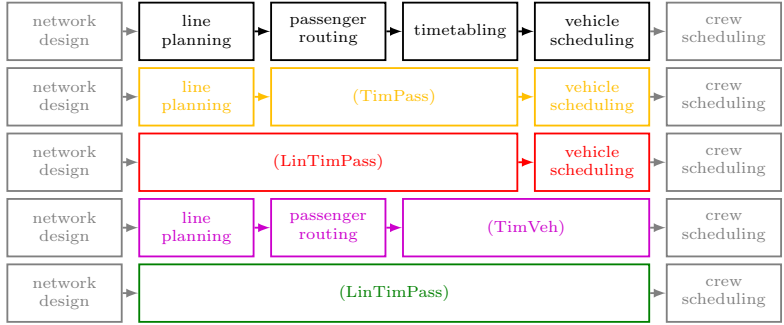

Before we further analyze partial integration we define a price of sequentiality for partially integrated solutions.

\begin{definition}
  Let $x^*$ be optimal for \msp{} with $f(x^*) >0$ and let
$\Part{k,l}$ be optimal w.r.t.\ $(\Int_{k,l})$.
Then the \emph{price of sequentiality for the partially integrated approach} w.r.t.\ $(\Int_{k,l})$ is defined as
	\[\PoS_{k,l} = \frac{f(\Part{k,l}) - f(x^*)}{f(x^*)}.\] 
\end{definition}

Note that Lemma~\ref{partial-Int-better-than-Seq} says that solving $(\Int_{k,l})$ leads to better solutions than solving the stages $k,k+1,\ldots,l$ sequentially.
As we will see in the following example, this does not guarantee that the resulting partially integrated solution is better than
the sequential one, i.e., the price of sequentiality might increase for some partially integrated approaches compared to the
sequential approach of solving all stages individually. This can also be seen in the experiments in Section~\ref{sec:experiments}
and in further examples in \cite{philinediss}.

\begin{example}
We extend Example~\ref{ex:pos_unbounded} by adding a third stage. Consider the sequential problems
\begin{align*}
	(\seq_1) \quad &  \{\min x_1 \colon x_1 \leq 1, x_1 \geq 0\}\\
	(\seq_2(\bar{x}_1)) \quad & \{\min  x_2 \colon x_2 \leq 1 -\bar{x}_1, x_2 \geq 1 - N \cdot \bar{x}_1,  x_2 \geq 0\},\\
	(\seq_3(\bar{x}_1,\bar{x}_2)) \quad & \{\min  x_3 \colon x_3 \geq N^2 \cdot \bar{x}_1,  x_3 \geq 0\}.
\end{align*}
For $N\geq 0$, the optimal sequential solution is $\bar{x}=(0,1,0)$ with objective value $1$. 

For $\lambda=(1,1,1)^t$ we know from Example~\ref{ex:pos_unbounded}, that the optimal solution of $(\Int_{1,2})$ is $\PartOpt{1,2}=(\frac{1}{N},0)$. Therefore, the partially integrated solution w.r.t.\ $(\Int_{1,2})$ is $\Part{1,2}=(\frac{1}{N},0,N)$ with objective value $\frac{N^2+1}{N}$.  

For the corresponding integrated problem \msp, i.e., for

\begin{mini*}
	{}{x_1 + x_2 + x_3}{}{\msp \;}
	\addConstraint{x_1 +x_2 }{\leq 1}{}
	\addConstraint{N\cdot x_1 + x_2 }{\geq 1}{}
	\addConstraint{-N^2 \cdot x_1 +x_3 }{\leq 0}{}
	\addConstraint{x_1, x_2,x_3 }{\geq 0}{}
\end{mini*}

the optimal  solution is $x^*=(0,1,0)$, the same as the sequential solution $\bar{x}$. Thus, the price of sequentiality is zero while  the price of sequentiality of the partially integrated approach w.r.t.\ $(\Int_{1,2})$ satisfies
\[\PoS = \frac{\frac{N^2+1}{N} -1}{1} = N + \frac{1}{N} \overset{N \to \infty}{\to} \infty.\]
\end{example}

In the next theorem we show in which case we can guarantee better solutions through a partially integrated approach, namely, when the problems to integrate are chosen properly.

    \begin{theorem}\label{thm:partial_objective}
      Let $(\seq_i(\xbari{i-1})), i\in \{1,\ldots,n\}$, be a family of sequential problems with sequential solution $\bar{x}$, obtained
      by algorithms $\aA_1,\ldots,\aA_n$ in the sequential solution approach. Let $x^*$ be an optimal solution for the
      corresponding multi-stage problem \msp.  Consider partial integration w.r.t.\ $(\Int_{k,n}(\xbari{k-1}))$, i.e., for the last $n-k+1$
      stages, and let $\Part{k,n}$ be the corresponding partially integrated solution. Then we have
      \[ f(x^*) = f(\Part{1,n}) \leq f(\Part{2,n}) \leq \ldots \leq f(\Part{n-1,n}) \leq f(\Part{n,n})=f(\bar{x}).\]
    \end{theorem}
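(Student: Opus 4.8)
The plan is to prove the chain by establishing the single inequality $f(\Part{k,n}) \leq f(\Part{k+1,n})$ for each $k \in \{1,\ldots,n-1\}$; concatenating these inequalities gives the full chain, while the two endpoints $f(\Part{1,n}) = f(x^*)$ and $f(\Part{n,n}) = f(\bar{x})$ follow directly from the identities $\Part{1,n}=x^*$ and $\Part{n,n}=\bar{x}$ recorded after Definition~\ref{def:partial}. Thus the whole argument reduces to comparing two consecutive partially integrated solutions that differ only in \emph{where} the leading block of sequentially solved stages ends and the integrated block begins.

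First I would unwind the construction of both solutions. Since both $\Part{k,n}$ and $\Part{k+1,n}$ solve stages $1,\ldots,k-1$ sequentially with the same deterministic algorithms, they share the common prefix $\bar{x}_1,\ldots,\bar{x}_{k-1}$; hence the leading summands of the objective, $C := \sum_{i=1}^{k-1}\lambda_i f_i(\xbari{i})$, agree in $f(\Part{k,n})$ and $f(\Part{k+1,n})$. Writing $\Part{k,n}=(\xbari{k-1}, v_k,\ldots,v_n)$ with $(v_k,\ldots,v_n)=\PartOpt{k,n}$ optimal for $(\Int_{k,n}(\xbari{k-1}))$, and $\Part{k+1,n}=(\xbari{k-1},\bar{x}_k,w_{k+1},\ldots,w_n)$ with $\bar{x}_k$ the sequential optimum of stage $k$ and $(w_{k+1},\ldots,w_n)=\PartOpt{k+1,n}$ optimal for $(\Int_{k+1,n}(\xbari{k}))$, the definition of the partially integrated objective gives the exact decompositions $f(\Part{k,n}) = C + f_{k,n}(\xbari{k-1}; v_k,\ldots,v_n)$ and $f(\Part{k+1,n}) = C + f_{k,n}(\xbari{k-1}; \bar{x}_k, w_{k+1},\ldots,w_n)$; the latter uses that the single-stage term $\lambda_k f_k$ and the tail objective $f_{k+1,n}$ reassemble precisely into $f_{k,n}$ evaluated at the continuation $(\bar{x}_k,w_{k+1},\ldots,w_n)$.

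The key step is to observe that this continuation is a \emph{feasible} point of $(\Int_{k,n}(\xbari{k-1}))$: we have $\bar{x}_k \in F_k(\xbari{k-1})$ because $\bar{x}_k$ solves $(\seq_k(\xbari{k-1}))$, and each $w_i$ satisfies $F_i(\xbari{k-1},\bar{x}_k,w_{k+1},\ldots,w_{i-1})$ because $(w_{k+1},\ldots,w_n)$ is feasible for $(\Int_{k+1,n}(\xbari{k}))$ and the prefix $(\xbari{k})$ is exactly $(\xbari{k-1},\bar{x}_k)$. Since $(v_k,\ldots,v_n)$ minimizes $f_{k,n}(\xbari{k-1};\cdot)$ over the feasible set of $(\Int_{k,n}(\xbari{k-1}))$, feasibility of the continuation yields $f_{k,n}(\xbari{k-1}; v_k,\ldots,v_n) \leq f_{k,n}(\xbari{k-1};\bar{x}_k,w_{k+1},\ldots,w_n)$, and adding the common constant $C$ gives $f(\Part{k,n}) \leq f(\Part{k+1,n})$, as required.

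I do not expect any deep obstacle: the argument is essentially that enlarging the integrated block by one stage toward the front can only help, since the previously constructed tail remains an admissible candidate. The one point that demands care is the bookkeeping in the objective decomposition — verifying that the prefix alignment $(\xbari{k}) = (\xbari{k-1},\bar{x}_k)$ makes the tail objective $f_{k+1,n}$ together with the $\lambda_k f_k$ term recombine \emph{exactly} into $f_{k,n}$, with no stage double-counted or omitted — so I would write out these index ranges explicitly rather than leave them implicit.
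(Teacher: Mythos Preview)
Your proposal is correct and follows essentially the same argument as the paper: reduce to showing $f(\Part{k,n}) \leq f(\Part{k+1,n})$, exploit the common sequential prefix $\bar{x}_1,\ldots,\bar{x}_{k-1}$, observe that $(\bar{x}_k,\PartOpt{k+1,n})$ is feasible for $(\Int_{k,n}(\xbari{k-1}))$, and compare against the optimum $\PartOpt{k,n}$ via the decomposition $f = C + f_{k,n}$. Your write-up is in fact slightly more explicit than the paper's about why the prefixes coincide (determinism of the $\aA_i$) and about the feasibility verification, which is fine.
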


    \begin{proof}
      Per definition we have $x^*=\Part{1,n}$ and $\bar{x}=\Part{n,n}$. It remains to show that
      $f(\Part{k,n}) \leq f(\Part{k+1,n})$ for all $k\in \{1,\ldots,n-1\}$. To this end, we compare
      \begin{eqnarray*}
        \Part{k,n}   & = & (\bar{x}_1,\ldots,\bar{x}_{k-1},\PartOpt{k,n}) \mbox{ and }\\
        \Part{k+1,n} & = & (\bar{x}_1,\ldots,\bar{x}_{k-1},\bar{x}_k,\PartOpt{k+1,n}),
      \end{eqnarray*}
      see Figure~\ref{fig:multistage:intk} for an illustration.
      Since $(\bar{x}_k,\PartOpt{k+1,n})$ is feasible for $(\Int_{k,n}(\bar{x}_1,\ldots,\bar{x}_{k-1}))$
      we obtain 
      \[ f_{k,n}(\PartOpt{k,n}) \leq f_{k,n}(\bar{x}_k,\PartOpt{k+1,n}),\]
      and hence
      \begin{eqnarray*}
      f(\Part{k,n}) & = & \sum_{i=1}^{k-1} \lambda_i \cdot f_i(\bar{x}_1,\ldots,\bar{x}_i) + f_{k,n}(\PartOpt{k,n}) \\
             & \leq & \sum_{i=1}^{k-1} \lambda_i \cdot f_i(\bar{x}_1,\ldots,\bar{x}_i) + f_{k,n}(\bar{x}_k,\PartOpt{k+1,n})\\
             &= & \sum_{i=1}^{k-1} \lambda_i \cdot f_i(\bar{x}_1,\ldots, \bar{x}_i) + \lambda_k \cdot  f_k(\bar{x}_1,\ldots, \bar{x}_k) + f_{k+1,n}(\PartOpt{k+1,n}) \\ 
              & = & f(\Part{k+1,n}) 
      \end{eqnarray*}
    \end{proof}
 
From Theorem~\ref{thm:partial_objective}, we conclude that we can decrease the price of sequentiality by
    integrating the \emph{last} stages in the solution process. 

\begin{corollary}\label{cor:pos_partially _integrated}
	\[\PoS_{n-1,n} \geq  \PoS_{n-2,n} \geq \ldots \geq \PoS_{2,n} \geq \PoS_{1,n} = 0.\] 
\end{corollary}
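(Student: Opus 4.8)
The plan is to derive the corollary directly from Theorem~\ref{thm:partial_objective} by observing that $\PoS_{k,n}$ depends on the partially integrated solution only through the scalar $f(\Part{k,n})$, and does so monotonically. Concretely, I would start from the definition $\PoS_{k,n} = \frac{f(\Part{k,n}) - f(x^*)}{f(x^*)}$ and note that $f(x^*)$ is a fixed positive constant, since $f(x^*)>0$ is assumed throughout Section~\ref{sec-PoS}. Hence the assignment $t \mapsto \frac{t - f(x^*)}{f(x^*)}$ is an affine, strictly increasing function of $t$, so ordering the numerators $f(\Part{k,n})$ is equivalent to ordering the values $\PoS_{k,n}$.

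The key step is then to invoke Theorem~\ref{thm:partial_objective}, which supplies the chain
\[ f(\Part{1,n}) \leq f(\Part{2,n}) \leq \ldots \leq f(\Part{n-1,n}). \]
Applying the increasing map above termwise preserves these inequalities and yields $\PoS_{1,n} \leq \PoS_{2,n} \leq \ldots \leq \PoS_{n-1,n}$, which is precisely the claimed chain read from right to left.

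It remains to record the boundary value $\PoS_{1,n}=0$. For this I would use the equality $f(x^*)=f(\Part{1,n})$ already established in Theorem~\ref{thm:partial_objective}: the numerator of $\PoS_{1,n}$ then vanishes, giving $\PoS_{1,n}=0$. I do not expect a genuine obstacle here, as the corollary is a one-line consequence of the theorem; the only points needing attention are that the indices in the corollary appear in the reverse order to those in Theorem~\ref{thm:partial_objective}, and that the standing assumption $f(x^*)>0$ is exactly what makes $t \mapsto (t-f(x^*))/f(x^*)$ monotone and each $\PoS_{k,n}$ well defined.
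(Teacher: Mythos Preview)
Your proposal is correct and matches the paper's approach: the corollary is stated immediately after Theorem~\ref{thm:partial_objective} without a separate proof, being an immediate consequence of applying the increasing affine map $t\mapsto (t-f(x^*))/f(x^*)$ to the chain of inequalities in that theorem, together with $f(\Part{1,n})=f(x^*)$ for the endpoint.
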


We observe this effect also in our experiments on partially integrated problems in public transport planning in Section~\ref{sec:experiments}. When integrating the last two planning stages timetabling and vehicle scheduling as \timveh{}, the resulting solution always is at least as good as the sequential solution approach.

In Theorem~\ref{thm:partial_objective} we started with a sequential solution $\bar{x}$ in which each 
  stage $(\seq_i)$ has been solved optimally. Although treating the problems sequentially is computationally much easier
  than solving \msp, even solving the sequential problems might be too hard in applications. This is,  e.g., the case
  for the timetabling stage in public transport optimization. In the following we hence present a generalization of
  Theorem~\ref{thm:partial_objective} in which we do not require optimality of the sequential solution. To this
  end, we allow the algorithms $\aA_1,\ldots,\aA_n$ to be heuristics instead of exact optimization algorithms.
  We obtain the following result.

\begin{theorem}
  Let $(\seq_i(\tilde{x}_1, \ldots, \tilde{x}_{i-1}))$, $i\in \{1,\ldots,n\}$, be a family of sequential problems with a (possibly) heuristic sequential
  solution $\tilde{x}$, obtained by heuristics or exact algorithms $\tilde{\aA}_1,\ldots,\tilde{\aA}_n$ in the sequential solution approach.
  As before, let $x^*$ be an optimal solution for the corresponding multi-stage problem \msp.

  Consider the following partial integration of the \emph{heuristic} sequential process  w.r.t.\ $(\Int_{k,n}(\tilde{x}_1, \ldots, \tilde{x}_{k-1}))$:
  We apply the heuristics $\tilde{\aA}_1,\ldots,\tilde{\aA}_{k-1}$ with solutions $\tilde{x}_1,\ldots,\tilde{x}_{k-1}$ followed
  by the \emph{exact} algorithm $\aA_{k,n}$
  solving $(\Int_{k,n}(\tilde{x}_1,\ldots,\tilde{x}_{k-1}))$. 
  Let $\tilde{x}^{k,n}$ be the corresponding heuristic partially integrated solution. Then we have
  \[ f(x^*) = f(\tilde{x}^{1,n}) \leq f(\tilde{x}^{2,n}) \leq \ldots \leq f(\tilde{x}^{n-1,n}) \leq f(\tilde{x}^{n,n})
      =f(\tilde{x}).\]
\end{theorem}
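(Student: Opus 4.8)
The plan is to recognize that the proof of Theorem~\ref{thm:partial_objective} never actually used the \emph{optimality} of the sequential components $\bar x_i$ for their stages $(\seq_i)$; it used only their \emph{feasibility}, $\bar x_i \in F_i(\bar x_1,\ldots,\bar x_{i-1})$, together with the exactness of the block solver $\aA_{k,n}$. Since a heuristic $\tilde\aA_i$ is still assumed to return a feasible point $\tilde x_i \in F_i(\tilde x_1,\ldots,\tilde x_{i-1})$, the same telescoping argument should carry over verbatim with $\bar x$ replaced by $\tilde x$. I would therefore present the result as a direct adaptation of the previous proof, highlighting exactly where feasibility (and nothing more) is invoked.

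Concretely, for a fixed $k \in \{1,\ldots,n-1\}$ I would write the two consecutive solutions as
\[ \tilde x^{k,n} = (\tilde x_1,\ldots,\tilde x_{k-1},\tilde x^*_{k,n}), \qquad \tilde x^{k+1,n} = (\tilde x_1,\ldots,\tilde x_{k-1},\tilde x_k,\tilde x^*_{k+1,n}), \]
which share the common heuristic prefix $(\tilde x_1,\ldots,\tilde x_{k-1})$. The crucial observation is that the tail $(\tilde x_k,\tilde x^*_{k+1,n})$ of $\tilde x^{k+1,n}$ is feasible for the block problem $(\Int_{k,n}(\tilde x_1,\ldots,\tilde x_{k-1}))$: the first component $\tilde x_k$ lies in $F_k(\tilde x_1,\ldots,\tilde x_{k-1})$ because $\tilde\aA_k$ outputs a feasible solution, and $\tilde x^*_{k+1,n}$ satisfies the remaining constraints $F_{k+1},\ldots,F_n$ by construction of the integrated block. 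Since $\aA_{k,n}$ solves $(\Int_{k,n}(\tilde x_1,\ldots,\tilde x_{k-1}))$ exactly, its optimum obeys $f_{k,n}(\tilde x^*_{k,n}) \le f_{k,n}(\tilde x_k,\tilde x^*_{k+1,n})$. Adding the prefix contribution $\sum_{i=1}^{k-1}\lambda_i f_i(\tilde x_1,\ldots,\tilde x_i)$ to both sides and splitting off the $k$-th term via $f_{k,n}(\tilde x_k,\tilde x^*_{k+1,n}) = \lambda_k f_k(\tilde x_1,\ldots,\tilde x_k) + f_{k+1,n}(\tilde x^*_{k+1,n})$ reproduces $f(\tilde x^{k,n}) \le f(\tilde x^{k+1,n})$, exactly as in the exact case.

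It then remains to pin down the two endpoints. For $k=1$ the block problem $(\Int_{1,n})$ is the whole of \msp, solved exactly by $\aA_{1,n}$, so $\tilde x^{1,n}=x^*$ and $f(\tilde x^{1,n})=f(x^*)$. At the other end, integrating the single final stage changes nothing about how it is solved, so $\tilde x^{n,n}=\tilde x$ and $f(\tilde x^{n,n})=f(\tilde x)$, in parallel with the convention $\Part{n,n}=\bar x$ used before. Chaining the $n-1$ inequalities from the previous paragraph then yields the asserted string of comparisons.

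The point I would flag as the true content of the result is that it hinges on feasibility rather than optimality of the intermediate solutions, which is precisely why the exactness of the block solver $\aA_{k,n}$ cannot be weakened: exactness is what guarantees that $\tilde x^*_{k,n}$ beats the specific feasible competitor $(\tilde x_k,\tilde x^*_{k+1,n})$ built from the heuristic output. I would accordingly state explicitly the standing hypotheses that every $\tilde\aA_i$ returns a feasible (not merely best-effort) point and that each block $(\Int_{k,n}(\tilde x_1,\ldots,\tilde x_{k-1}))$ stays feasible, so that all the objective values above are well defined; with these in hand, no genuinely new estimate beyond the feasibility-plus-exactness step is needed.
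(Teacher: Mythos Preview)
Your proposal is correct and takes essentially the same approach as the paper: the paper's entire proof is the single observation that optimality of the sequential solution was never used in the proof of Theorem~\ref{thm:partial_objective}, so one may replace $\bar{x}$ by $\tilde{x}$ throughout. You simply spell out that argument in full, including the feasibility-plus-exactness step and the endpoint identifications, which the paper leaves implicit.
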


\begin{proof}
Since optimality   of the sequential solution has not been used in the proof of Theorem~\ref{thm:partial_objective}, we can re-use it by replacing $\bar{x}$ by $\tilde{x}$.
\end{proof}

When solving problems in public transport planning such as line planning, passenger routing, timetabling and
vehicle scheduling, we hence can still use the result of
Theorem~\ref{thm:partial_objective}
to improve the planning process in public transport planning
even if the planning stages themselves cannot be solved to optimality.

 \begin{figure}[h]
	\begin{center}
	\resizebox{\textwidth}{!}{
		\begin{tikzpicture}[thick, scale=1, font= \footnotesize]
	\node[draw, rectangle, minimum width=1.5cm] (seq_1_1) at (0,0) {$(\seq_1)$};
	\node[draw, rectangle,minimum width=1.5cm] (seq_2_1) at (2,0) {$(\seq_2)$};
	\node[minimum width=1.5cm] (seq_3_1) at (4,0) {$\ldots$};
	\node[draw, rectangle, , minimum width=1.5cm] (seq_k1_1) at (6,0) {$(\seq_{k-1})$};
	\node[draw, rectangle, minimum width=1.5cm] (seq_k_1) at (8,0) {$(\seq_k)$};
	\node[draw, rectangle, minimum width=3.5cm] (int_1) at (11,0) {$(\Int_{k+1,n})$};
		\node[draw, rectangle, minimum width=1.5cm] (seq_1_2) at (0,-1) {$(\seq_1)$};
	\node[draw, rectangle, minimum width=1.5cm] (seq_2_2) at (2,-1) {$(\seq_2)$};
	\node[minimum width=1.5cm] (seq_3_2) at (4,-1) {$\ldots$};
	\node[draw, rectangle,  minimum width=1.5cm] (seq_k1_2) at (6,-1) {$(\seq_{k-1})$};
	\node[draw, rectangle, minimum width=5.5cm] (int_2) at (10,-1) {$(\Int_{k,n})$};
	\draw[-latex] (seq_1_1) to (seq_2_1);
	\draw[-latex] (seq_2_1) to (seq_3_1);
	\draw[-latex] (seq_3_1) to (seq_k1_1);
	\draw[-latex] (seq_k1_1) to (seq_k_1);
	\draw[-latex] (seq_k_1) to (int_1);
	\draw[-latex] (seq_1_2) to (seq_2_2);
	\draw[-latex] (seq_2_2) to (seq_3_2);
	\draw[-latex] (seq_3_2) to (seq_k1_2);
	\draw[-latex] (seq_k1_2) to (int_2);
\end{tikzpicture}
		}
		\caption{Integrating the last $n-k$ or $n-k+1$ stages of \msp.}\label{fig:multistage:intk}
	\end{center}
	\end{figure}
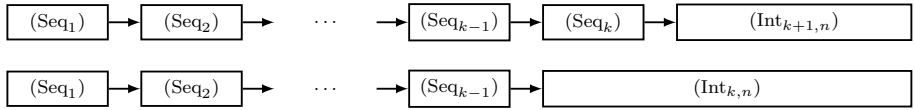	

\section{Experiments}\label{sec:experiments}
\label{sec-experiments}

In this section we experimentally evaluate the price of sequentiality for (partially) integrating the problems line planning, passenger routing, timetabling and vehicle scheduling in public transport planning. We use two small artificial data sets \texttt{small} and \texttt{toy} from \cite{lintimhp,lintim}, see Figure~\ref{fig:datasets}, as a proof of concept on a computer with a Ryzen 5 PRO 2500U CPU \@2GHz and 16 GB RAM running Gurobi~8, \cite{gurobi8}. 

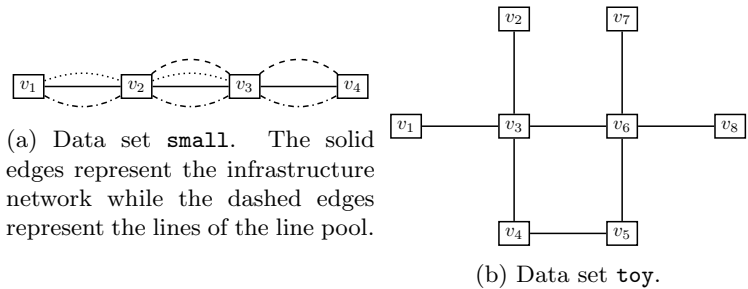
\begin{figure}[h!]
\begin{center}
\begin{subfigure}{0.4\textwidth}
\resizebox{\textwidth}{!}{
	\begin{tikzpicture}[thick, scale = 0.5]
		\node[draw, rectangle] (v1) at (0,0) {$v_1$};
		\node[draw, rectangle] (v2) at (4,0) {$v_2$};
		\node[draw, rectangle] (v3) at (8,0) {$v_3$};
		\node[draw, rectangle] (v4) at (12,0) {$v_4$};
		\draw (v1) to (v2);
		\draw (v2) to (v3);
		\draw (v3) to (v4);
		\draw[dash dot, bend right] (v1) to (v2);
		\draw[dash dot, bend right] (v2) to (v3);
		\draw[dash dot, bend right] (v3) to (v4);
		\draw[dotted, bend left=20] (v1) to (v2);
		\draw[dotted, bend left=20] (v2) to (v3);
		\draw[dashed, bend left=40] (v2) to (v3);
		\draw[dashed, bend left=40] (v3) to (v4);
	\end{tikzpicture}	
}
\caption{Data set \texttt{small}. The solid edges represent the infrastructure network while the dashed edges represent the lines of the line pool.} \label{fig:dataset:small}
\end{subfigure}
\begin{subfigure}{0.4\textwidth}
\resizebox{\textwidth}{!}{
	\begin{tikzpicture}[thick, scale = 2]
		\node[draw, rectangle] (v1) at (1,2) {$v_1$};
		\node[draw, rectangle] (v2) at (2,3) {$v_2$};
		\node[draw, rectangle] (v3) at (2,2) {$v_3$};
		\node[draw, rectangle] (v4) at (2,1) {$v_4$};
		\node[draw, rectangle] (v5) at (3,1) {$v_5$};
		\node[draw, rectangle] (v6) at (3,2) {$v_6$};
		\node[draw, rectangle] (v7) at (3,3) {$v_7$};
		\node[draw, rectangle] (v8) at (4,2) {$v_8$};
		\draw (v1) to (v3);
		\draw (v2) to (v3);
		\draw (v3) to (v4);
		\draw (v4) to (v5);
		\draw (v5) to (v6);
		\draw (v3) to (v6);
		\draw (v6) to (v7);
		\draw (v6) to (v8);
	\end{tikzpicture}	
}
\caption{Data set \texttt{toy}.} \label{fig:dataset:toy}
\end{subfigure}
\caption{Public transportation networks.}\label{fig:datasets}
\end{center}
\end{figure}

At first, we consider the matrix structure of the integrated problem \lintimveh. The total number of variables and constraints for data sets \texttt{small} and \texttt{toy} is given in Table~\ref{tab:size} while the matrix structure is presented graphically in Figure~\ref{fig:matrix_structure}. Note that the number of variables and constraints for the line planning subproblems are given explicitly as these blocks are not visible due to their size. 

The size of the blocks for the subproblems varies considerably. This is one of the reasons why decomposition the matrix according to the subproblems is not well suited for a Dantzig-Wolfe decomposition approach, see e.g.~\cite{LPSS-CASPT18} and \cite{lubbeckeprimal} for more general contexts. Especially passenger routing contributes largely to the overall size of the matrix which motivate the reduction of the solution space for passenger routing as e.g.~presented in \cite{philinediss,SchiSch18,ATMOS2021}.

\begin{table}[h!]
\small{
\begin{center}
	\begin{tabular}{lrr}
	\toprule
  & \texttt{small} & \texttt{toy} \\ 
 \midrule
 $|V|$ & 4 & 8 \\ 
 $|E|$ & 3 & 8 \\ 
 \midrule
 variables \lintimveh & 2322& 70152 \\ 
 constraints \lintimveh & 5033 & 118060 \\ 
 \bottomrule
 \end{tabular} 
\end{center}
\caption{Problem size for data sets \texttt{small} and \texttt{toy}.}\label{tab:size}}
\end{table}

\begin{figure}[h!]
\begin{center}
\begin{subfigure}{0.49\textwidth}
\resizebox{\textwidth}{!}{
\includegraphics{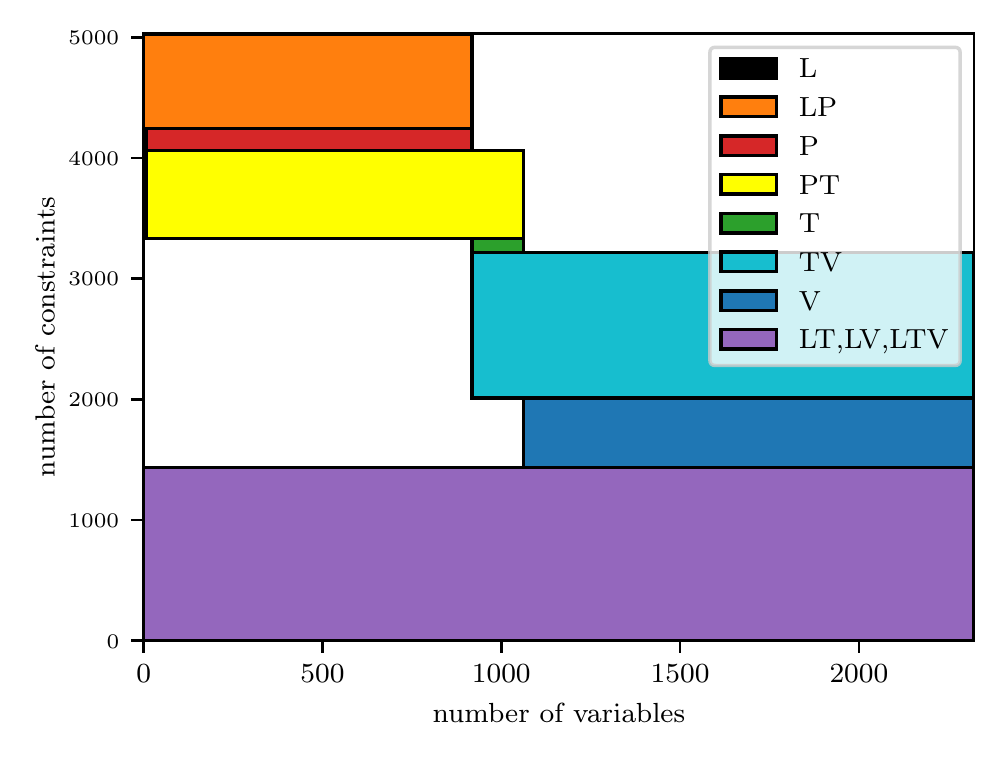}
}
\caption{Data set \texttt{small}. Total: 2322 variables, 5033 constraints. Line planning: 6 variables, 9 constraints.}
\end{subfigure}
\begin{subfigure}{0.49\textwidth}
\resizebox{\textwidth}{!}{
\includegraphics{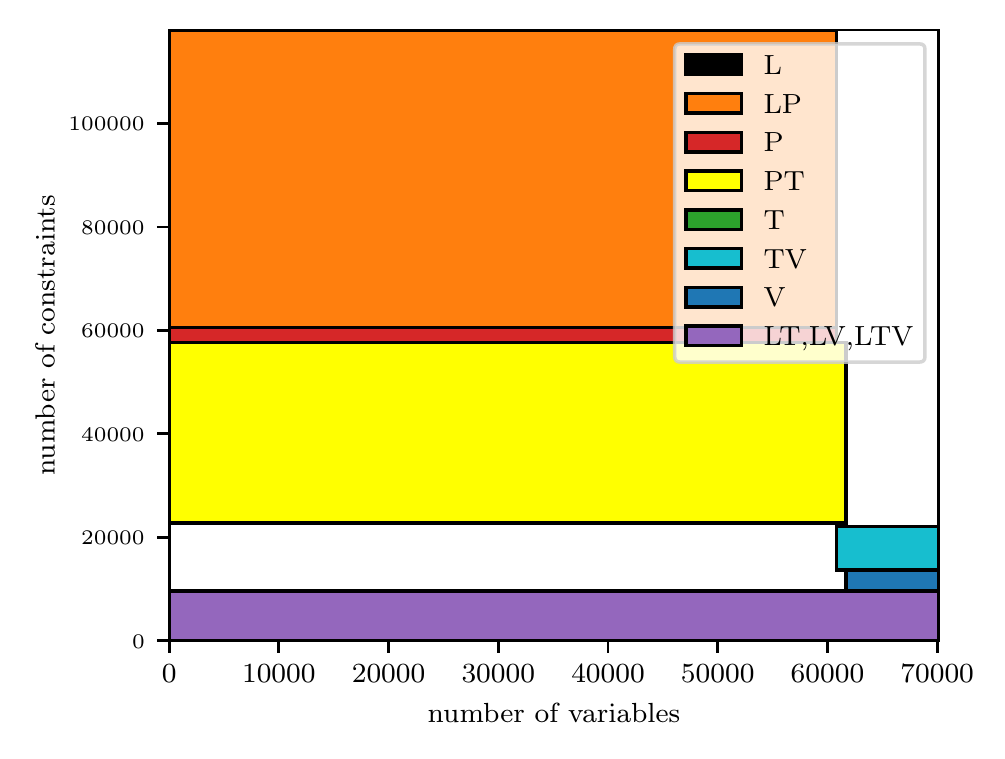}
}
\caption{Data set \texttt{toy}. Total: 70152 variables, 118060 constraints. Line planning: 16 variables, 40 constraints.}
\end{subfigure}
\end{center}
\caption{Matrix structure of the integrated line planning, passenger routing, timetabling and vehicle scheduling problem.}\label{fig:matrix_structure}
\end{figure}

For evaluating the integrated solution approach, we consider different parameters $\lambda_{\TT}, \lambda_{\cost}$ as weights for the travel time $f_3$ and the costs $f_4$ in the objective function
\[\lambda_\TT \cdot f_3 + \lambda_\cost \cdot f_4\]
given in Section~\ref{sec:model}. However, for the partially integrated model \lintimpass{}, it is often beneficial to include the line costs $f_1$ as an approximation of the costs $f_4$ and use the auxiliary objective
\[\lambda_{\linecost} \cdot f_1 + \lambda_\TT \cdot f_3.\] An overview of the parameters used in this section can be found in Table~\ref{tab:parameters}.

\begin{table}[h]
\small{
\begin{center}
\begin{tabular}{lcc}
\toprule
 \textbf{Data set} & $(\lambda_\linecost,\lambda_\TT,\lambda_\cost)$&  \textbf{Figure}\\ 
\midrule
\texttt{small} & (0, 1000, 1) & \ref{fig:small:ptp} \\
\midrule
\texttt{toy} & \makecell{(0,10,1)\\(0,1,0)\\(0,0,1)\\(0,40,1)\\(25000,40,-)$^*$\\(25000,10,-)$^*$\\(1000,40,-)$^*$\\(1000,10,-)$^*$\\(500,40,-)$^*$\\(500,10,-)$^*$} & 
\makecell{\ref{fig:toy1:weighted_sum}, \ref{fig:toy:pareto_restricted}\\
\ref{fig:toy2:weighted_sum}\\
\ref{fig:toy3:weighted_sum}, 
\ref{fig:toy:pareto_restricted}\\
\ref{fig:toy4:weighted_sum}, 
\ref{fig:toy:pareto_restricted}\\
\ref{fig:toy:pareto_restricted}\\
\ref{fig:toy:pareto_restricted}\\
\ref{fig:toy:pareto_restricted}\\
\ref{fig:toy:pareto_restricted}\\
\ref{fig:toy:pareto_restricted}\\
\ref{fig:toy:pareto_restricted}}\\
\bottomrule
\end{tabular} 
\end{center}}
\caption{Parameters used in the objective function. Parameter sets $^*$ are only used for \lintimpass.}\label{tab:parameters}
\end{table}

\paragraph*{Problem size}
As Table~\ref{tab:solvertime} shows, the time limit of one hour does not suffice to solve \lintimveh{} to optimality for all considered parameters $\lambda_{\TT}, \lambda_{\cost}$ for the small artificial data set \texttt{toy}. We therefore compare different partially integrated solution approaches as presented in Figure~\ref{fig:ptp}. The solver time of these partially integrated problems is considerably smaller than for \lintimveh{}, see Table~\ref{tab:solvertime}. This is especially obvious for data set \texttt{toy}, where the solver time of the partially integrated problems ranges between 0.003\% and 12.2\% of the solver time of \lintimveh. Additionally, Table~\ref{tab:solvertime} illustrates the amplification of the increase of the problem size compared to the data set size. While the infrastructure network for data set \texttt{toy} is roughly twice as large as the infrastructure network for data set \texttt{small}, the solver time for \lintimveh{} increases from finding an optimal solution in less than two seconds to an average gap of 50\% for the one hour time limit.

\begin{table}[h]
\begin{center}
\small{
\begin{tabular}{lrrrr}
\toprule
\multirow{2}{*}{\textbf{Solution approach}} & \multicolumn{2}{c}{\textbf{\texttt{small}}}
& \multicolumn{2}{c}{\textbf{\texttt{toy}}}\\

 & \textbf{time}& \textbf{gap} & \textbf{time} & \textbf{gap} \\ 
\midrule 
\timpass & 0.02 & 0 & 0.08 & 0 \\ 
\lintimpass & 0.35 & 0 & 331.21 & 0 \\ 
\timveh & 0.52 & 0 & 3.42 & 0 \\ 
\lintimveh & 1.35 & 0 & 2709.70 & 50.35\% \\ 
\bottomrule
\end{tabular} }
\caption{Solver time (in seconds) and optimality gap of the (partially) integrated models for data sets \texttt{small} and \texttt{toy} for a time limit of one hour. The values reported here are mean values over all considered parameters, see Table~\ref{tab:parameters}.}
\label{tab:solvertime}
\end{center}
\end{table}

\paragraph{Comparing the solution quality of partially integrated solution approaches} In Theorem~\ref{thm:partial_objective}, we show that the integration of the last stages always leads to solution that are better or at least as good as the sequential solution. 
This is also reflected by the experiments, see Figure~\ref{fig:small:pos} for data set \texttt{small} and Figure~\ref{fig:toy:ptp} for data set \texttt{toy}.
For the latter, we see that emphasizing costs in the objective, which is for $\lambda_\TT =10, \lambda_\cost=1$ or even $\lambda_\TT=0, \lambda_\cost=1$, \timveh{} leads to a considerable improvement over the sequential solution approach. We marked the solution approaches for which an improvement can be guaranteed with a gray circle.

For the other partially integrated solution approaches, there is no guarantee to find better solutions compared to the sequential approach. 
Our experiments show that \timpass{} is able to improve the solution quality, see Figures~\ref{fig:toy2:weighted_sum} to \ref{fig:toy4:weighted_sum}.
 However, \lintimpass{} with objective parameters $\lambda_\linecost =0, \lambda_\TT > 0$ leads to an impaired solution quality for all cases where for the costs $f_\cost$ the parameter is positive, i.e., $\lambda_\cost >0$. 
 Only for the case that only the travel time $f_\TT$ is considered, i.e., for $\lambda_\TT=1, \lambda_\cost=0$, see Figure~\ref{fig:toy2:weighted_sum}, the objective is as good as for \lintimveh. 
Note that in this case \lintimveh{} and \lintimpass{} coincide as a feasible vehicle schedule can always be constructed.

\begin{figure}[h!]
\begin{center}
\begin{subfigure}{0.45\textwidth}
	\begin{center}
	\resizebox{\textwidth}{!}{
		\includegraphics{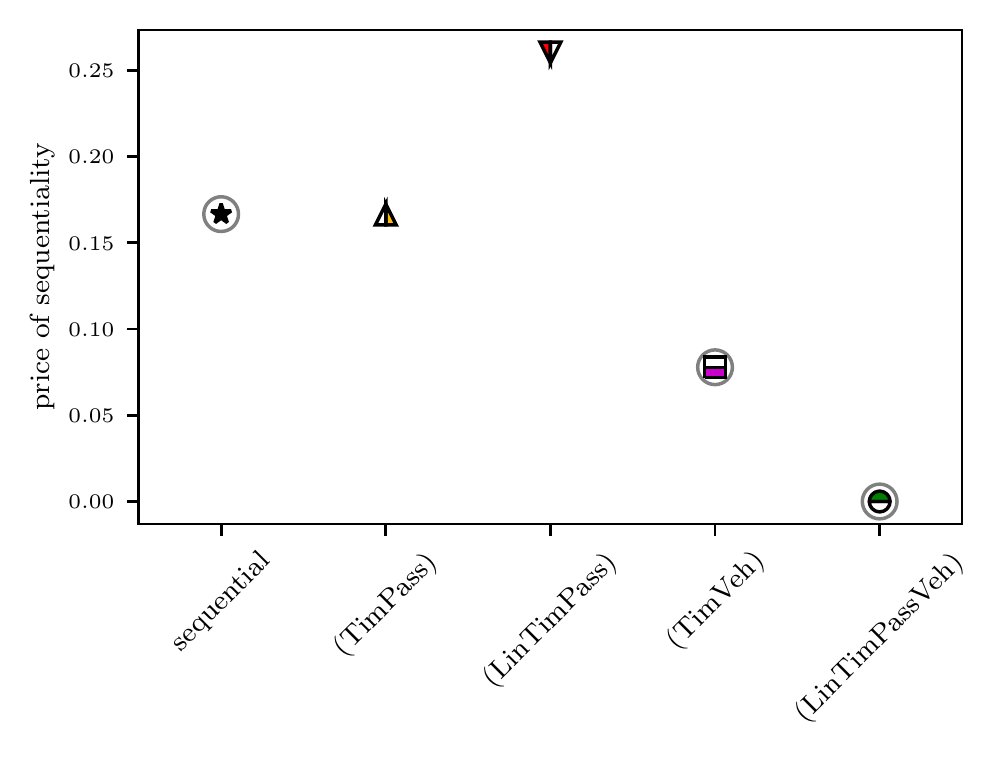}
	}
	\end{center}
	\caption{The price of sequentiality.}\label{fig:small:pos}
\end{subfigure}
\begin{subfigure}{0.45\textwidth}
	\begin{center}
	\resizebox{\textwidth}{!}{
		\includegraphics{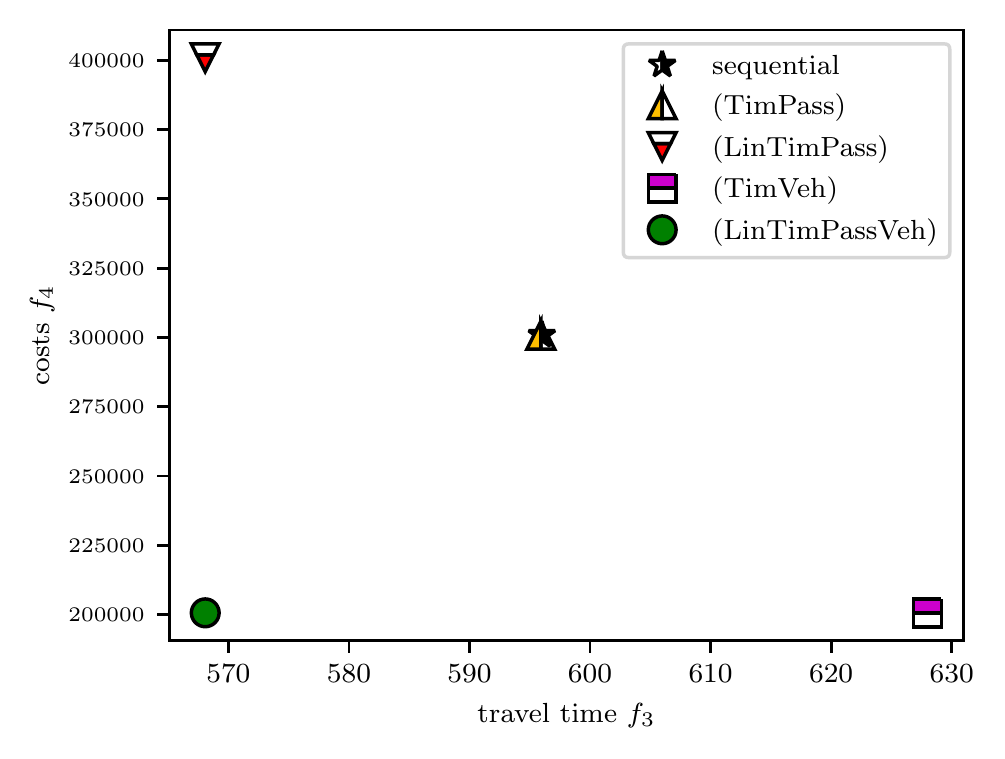}
	}
	\end{center}
	\caption{Travel time $f_\TT$ and costs $f_\cost$.}\label{fig:small:pareto}
\end{subfigure}
\caption{Evaluation of the price of sequentiality as well as of travel time $f_\TT$ and costs $f_\cost$ for data set \texttt{small} for parameters $\lambda_{\TT}=1000, \lambda_{\cost}=1$.}\label{fig:small:ptp}
\end{center}
\end{figure}

\begin{figure}[h]
\begin{center}
\begin{subfigure}{0.45\textwidth}
	\begin{center}
	\resizebox{\textwidth}{!}{
		\includegraphics{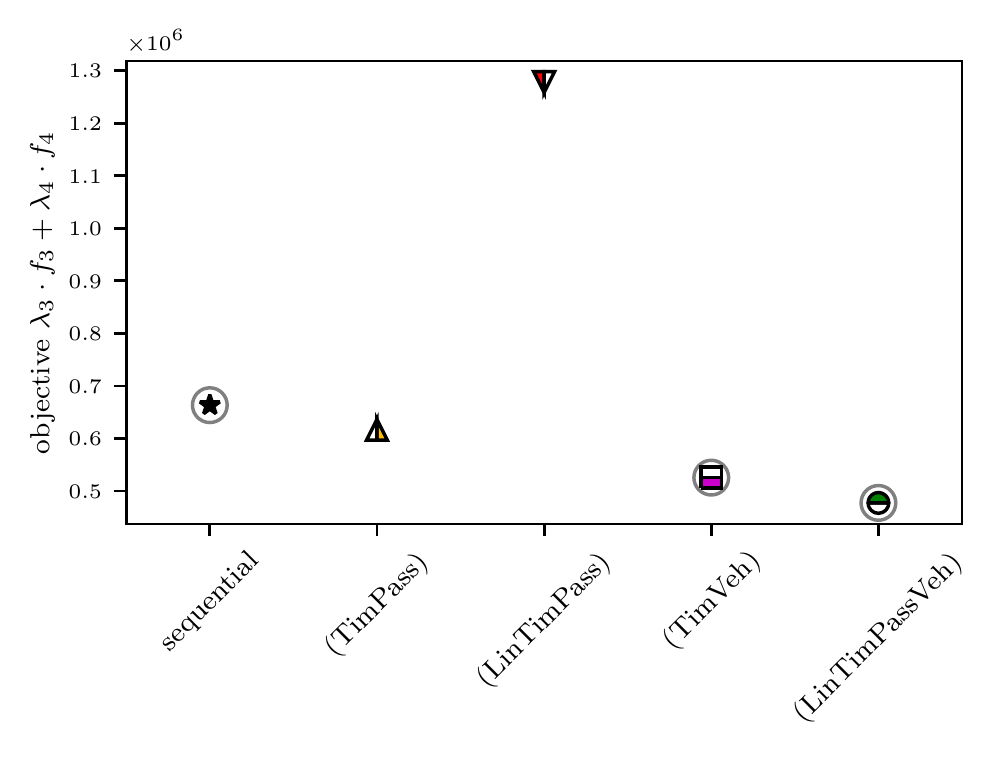}
	}
	\end{center}
	\vspace*{-5ex}
	\caption{$\lambda_{\TT}=10, \lambda_{\cost}=1$}\label{fig:toy1:weighted_sum}
\end{subfigure}
\begin{subfigure}{0.45\textwidth}
	\begin{center}
	\resizebox{\textwidth}{!}{
		\includegraphics{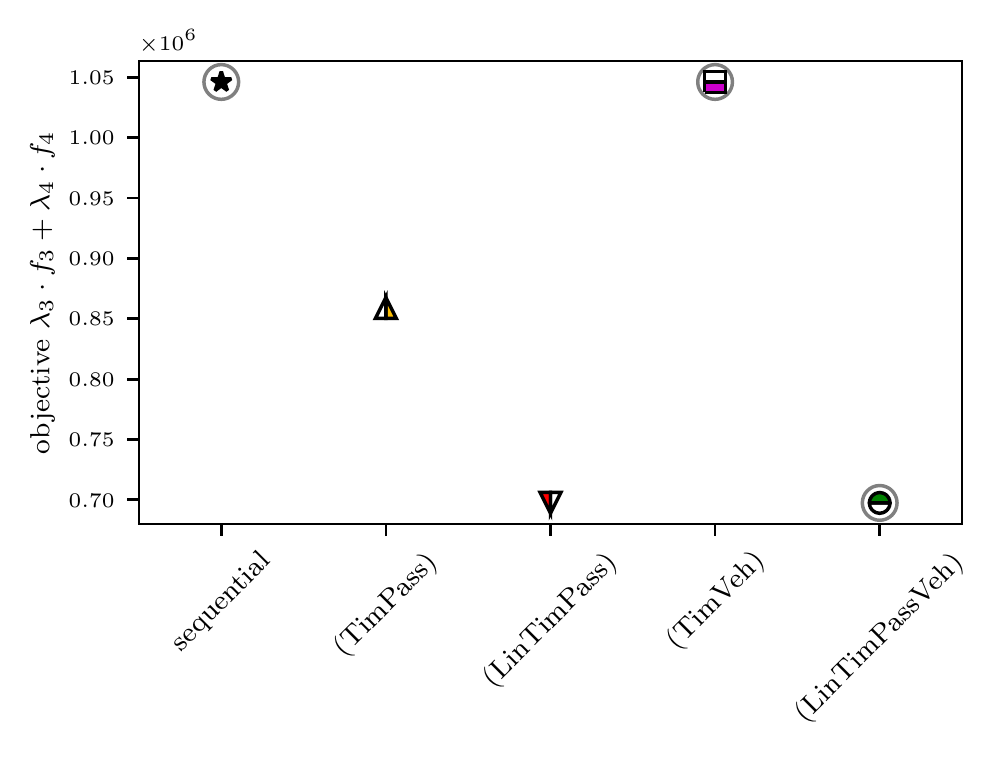}
	}
	\end{center}
	\vspace*{-5ex}
	\caption{$\lambda_{\TT}=1, \lambda_{\cost}=0$}\label{fig:toy2:weighted_sum}
\end{subfigure}
\begin{subfigure}{0.45\textwidth}
	\begin{center}
	\resizebox{\textwidth}{!}{
		\includegraphics{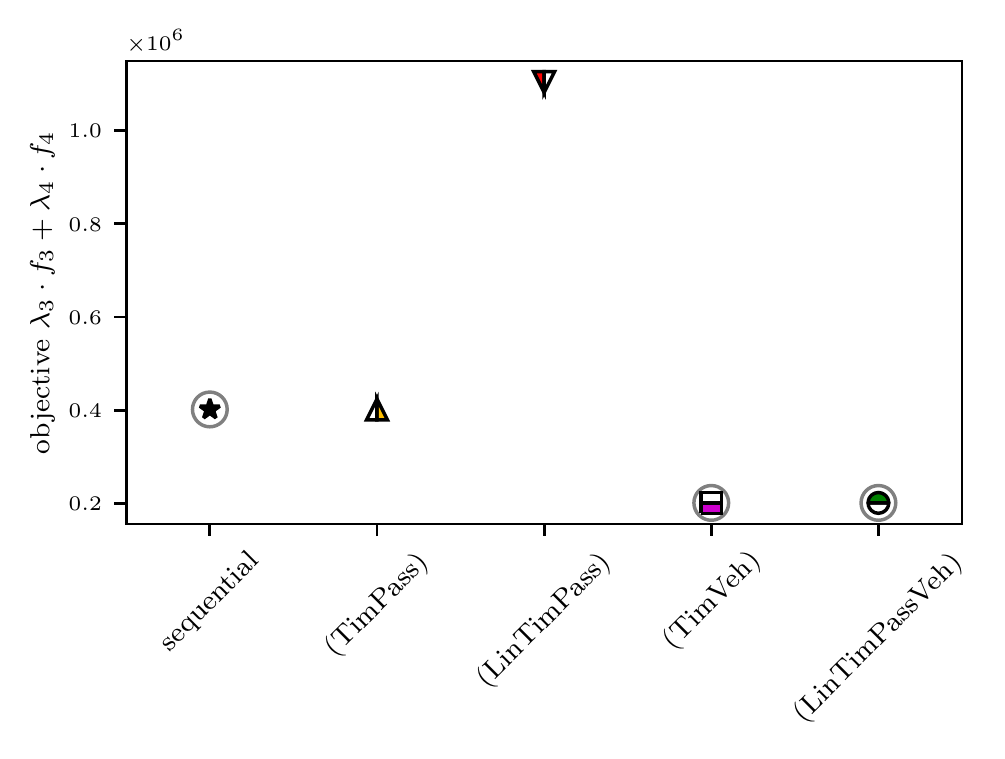}
	}
	\end{center}
	\vspace*{-5ex}
	\caption{$\lambda_{\TT}=0, \lambda_{\cost}=1$}\label{fig:toy3:weighted_sum}
\end{subfigure}
\begin{subfigure}{0.45\textwidth}
	\begin{center}
	\resizebox{\textwidth}{!}{
		\includegraphics{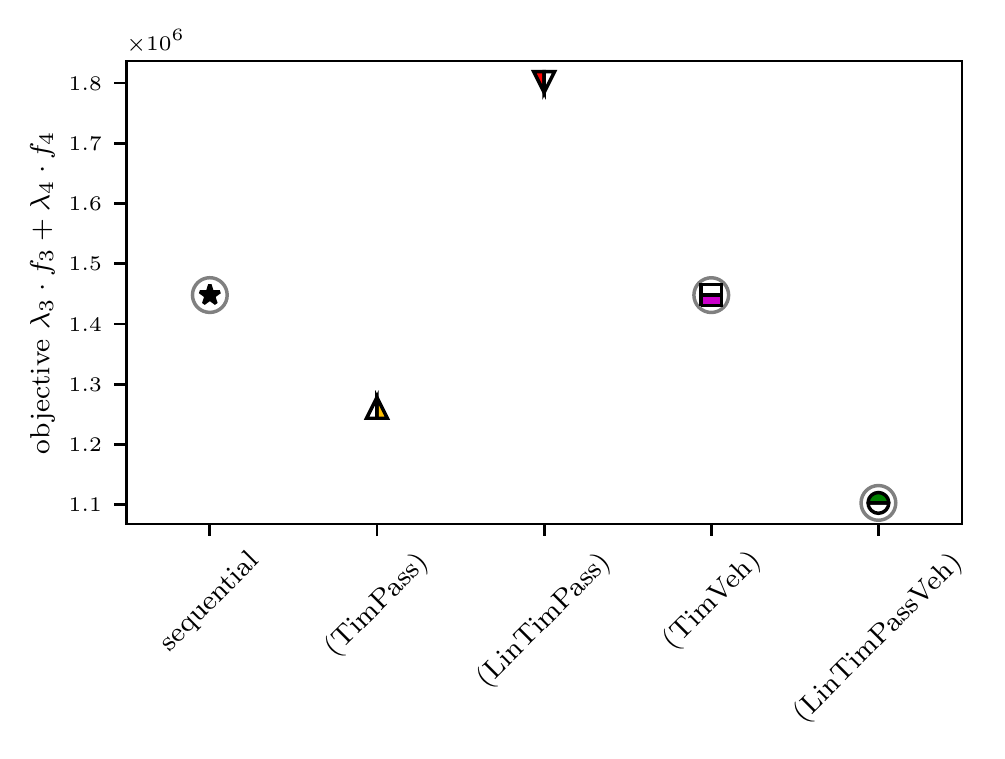}
	}
	\end{center}
	\vspace*{-5ex}
	\caption{$\lambda_{\TT}=40, \lambda_{\cost}=1$}\label{fig:toy4:weighted_sum}
\end{subfigure}
\caption{Evaluation of the objective $ \lambda_\TT \cdot f_\TT + \lambda_\cost \cdot f_\cost$ for data set \texttt{toy}. We marked the solution approaches for which an improvement can be guaranteed with a gray circle.}\label{fig:toy:ptp}
\end{center}
\end{figure}

\paragraph{Adding auxiliary objectives to the partially integrated solution approaches} Auxiliary objectives in earlier stages of the sequential process and in partially integrated solution approaches may be very helpful. Here, we especially see that using the line costs $f_\linecost$ as an approximation of the costs $f_\cost$ significantly improves the solution quality of \lintimpass{}. As detailed in Table~\ref{tab:parameters}, we changed the objective function for \lintimpass{} from $f_\TT$ to 
\[\lambda_\linecost \cdot f_\linecost + \lambda_\TT \cdot f_\TT\]
for various values of $\lambda_\linecost, \lambda_\TT$. Figure~\ref{fig:toy:pareto_restricted} shows that this enables \lintimpass{} to find solutions with low costs $f_\cost$ in addition to ones with low travel time $f_\TT$.

\begin{figure}
	\begin{center}
		\resizebox{0.7\textwidth}{!}{
		\includegraphics{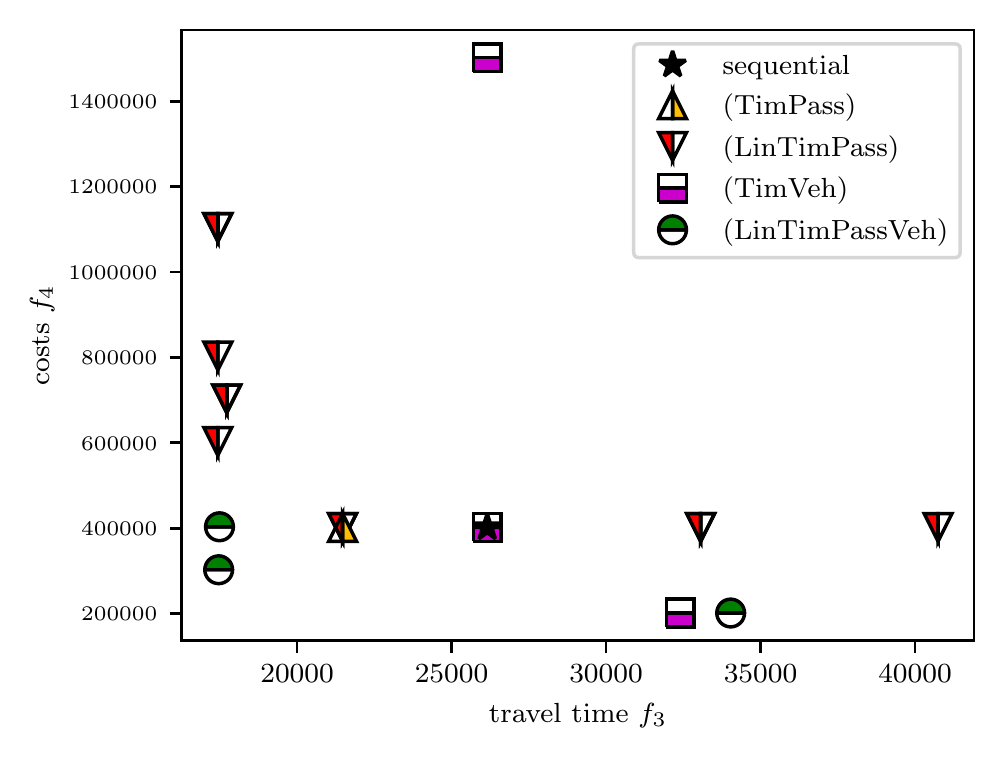}}
	\end{center}
	\vspace*{-5ex}
	\caption{Evaluation of travel time $f_\TT$ and costs $f_\cost$ for different parameter sets $(\lambda_\linecost,  \lambda_\TT,\lambda_\cost)$ for data set \texttt{toy}, see Table~\ref{tab:parameters}. Note that the solutions for \lintimveh{} have not been computed to optimality.}\label{fig:toy:pareto_restricted}
\end{figure}

\paragraph{Using varying weighted sum scalarization for finding Pareto solutions}
As discussed in Section~\ref{sec-notation}, there exists two possible interpretations of the objective function 
\[\lambda_\TT \cdot f_\TT + \lambda_\cost \cdot f_\cost.\]
On the one hand, $\frac{\lambda_\TT}{\lambda_\cost}$ can be regarded as the (known) value of time, such that the objective minimizes the generalized costs. In this case, it suffices to compute solutions for one set of parameters $\lambda_\TT, \lambda_\cost$.

On the other hand, we can consider both the travel time $f_\TT$ and the costs $f_\cost$ separately and use multi-objective optimization to find Pareto solutions, i.e., solutions that cannot be improved in both $f_\TT$ and $f_\cost$. 
Here, the parameter $\lambda_\TT, \lambda_\cost$ are scalarization parameters in a weighted sum approach. 
We consider this in Figures~\ref{fig:small:pareto} and \ref{fig:toy:pareto_restricted}.
First of all, note that when \lintimveh{} can be computed to optimality, all solutions are weakly Pareto optimal. 
For data set \texttt{small}, we can even show that the solution for $\lambda_\TT =1000, \lambda_\cost=1$ is an ideal solution. i.e., neither the travel time $f_\TT$ nor the cost $f_\cost$ can be improved.  Therefore, we do not need to consider further scalarization parameters for data set \texttt{small}.

For data set \texttt{toy}, the solver time of one hour did not suffice to compute \lintimveh{} to optimality. 
Therefore, one solution of \lintimveh{} is dominated by a solution of \timveh.  
In regard of the high computation time for \lintimveh{}, it is especially interesting to compare the solution quality of the partially integrated solution approaches. As expected, \timpass{} and \lintimpass{} can be used to  find solutions with low travel time $f_\TT$ while \timveh{} finds the solution with the lowest costs $f_\cost$. While the sequential solution approach finds a solution with comparatively low costs $f_\cost$, it is dominated both by solutions of the integrated approach \lintimveh{} and the partially integrated approaches \timpass{} and \lintimpass.   
Overall, Figure~\ref{fig:toy:pareto_restricted} shows that varying the scalarization parameters lead to interesting compromises between travel time $f_\TT$ and costs $f_\cost$ and that partially integrated solution approaches yield are a good way to generate varying solutions when the expense to solve the integrated problem is too high.

\section{Conclusions and further research}
\label{sec-conclusion}
In this paper, we consider the integration of sequential problems into a multi-stage problem and compare the solution quality for both approaches by considering the price of sequentiality. 
While in general the price of sequentiality is unbounded, we show that it highly depends on the definition of the sequential problems and the order in which these are solved.  
Applying these findings to the sequential problems line planning, timetabling, passenger routing and vehicle scheduling in public transport planning shows that partial integration leads to better solutions than the traditional sequential solution approach while keeping the problem size manageable.

On the one hand, we aim to extend the integrated model for public transportation.
For example, the integrated formulation can be extended to include the time slice model introduced in \cite{GGNS} in order to distribute the favored departure times of the passengers. Another important aspect, especially in high-demand networks is the integration of vehicle capacity for routing as in \cite{burggraeve2017integrating,goerigk2017line,PaeSchiSch18,Goverde19}. Both approaches are
omitted here to not further complicate the model.
Also, the relation to the eigenmodel presented in \cite{Sch16} is under consideration:
The different sequential solution approaches of the eigenmodel can be used to retrieve an (equivalent) multi-stage problem for line planning, timetabling, passenger routing and vehicle scheduling that can be compared to the one presented here.

On the other hand, we are working on extending the theoretical background of integrating sequential processes. One important aspect is to incorporate different possibilities to derive objective functions for the multi-stage problem. Here, we are especially considering multi-criteria objective functions, e.g.~in the context of complex systems, see \cite{DIETZ2020581}. 

\section*{Acknowledgments}
We thank an anonymous referee for strengthening Theorem~\ref{thm:bound-pos-single-objective} of this paper.

\clearpage

\bibliography{literature}
\bibliographystyle{alpha}

\clearpage

\appendix

\section{IP models for sequential public transport problems}
Here we provide details which have been left out in Section~\ref{sec:seq_sol}.

\subsection{The event-activity network (EAN)}
\label{app-EAN}
A formal definition of the event-activity network is given. We start with
the basic definition needed for timetabling and add the requirements for passenger routing
in Notation~\ref{nota-EAN-routing}.

\begin{notation}
  \label{nota-EAN-formal}
  Give a line plan $\cL$, the corresponding \emph{event-activity network}
  $\cN(\cL)=(\cE(\cL), \cA(\cL))$ is given by
  arrival and departure events of every line at every stop
  $\cE(\cL) = \cE_{\arr}(\cL) \cup \cE_{\dep}(\cL)$ with
  \begin{align*}
   \cE_{\arr}(\cL) &= \{(v, \arr, l) \colon v \in l \cap  V,l \in \cL\},\\
   \cE_{\dep}(\cL) &=  \{(v, \dep, l) \colon v \in l \cap V,l \in \cL\}
 \end{align*} 
 and drive, wait and transfer activities $\cA(\cL)= \cA_{\drive}(\cL) \cup \cA_{\wait}(\cL)
 \cup \cA_{\trans}(\cL)$ with
 \begin{align*}
	\cA_{\drive}(\cL) &= \{((v_1, \dep, l),(v_2, \arr, l)) \colon e=(v_1,v_2) \in l \cap E, l \in \cL\},\\
        \cA_{\wait}(\cL) &= \{((v, \arr, l),(v, \dep, l)) \colon v \in l \cap V, l \in \cL\},\\
	\cA_{\trans}(\cL) &= \{((v, \arr, l_1),(v, \dep, l_2)) \colon v \in l_1 \cap l_2 \cap V, l_1,l_2 \in \cL, l_1 \neq l_2\}.
 \end{align*}
 
For a simpler notation,
 we collect the drive and wait activities of line $l$ in $\cA(l)$ and the transfer activities between line $l_1$ and $l_2$ in $\cA(l_1,l_2)$.
\end{notation}

Finally, for each activity $a \in \cA(\cL)$, upper and lower bounds on the duration of the activity are given as $L_a,U_a \in \N$ with $0 \leq L_a \leq U_a$.
We define these lower and upper bounds of the activities according to the edges of the
underlying PTN.
More precisely, we assume that in the PTN lower and upper bounds $L_e^{\drive}\leq U_e^{\drive}$ for travel along edge $e \in E$ are given and that $L_v^{\wait} \leq U_v^{\wait}$, $L_v^{\trans}\leq U_v^{\trans}$, $v \in V$, are the stop dependent bounds for the dwell times of a vehicle and for the transfer time of a passenger. 
We set:
\begin{alignat}{5}
		L_a &= L_e^{\drive}, & U_a &= U_e^{\drive}, &&\quad a = ((u,l,\dep),(v,l,\arr))\in \cA_\drive(\cL), e = (u,v) \nonumber \\
		L_a &= L_v^{\wait}, & U_a &= U_v^{\wait}, &&\quad a = ((v,l,\arr),(v,l,\dep))\in \cA_\wait(\cL)   \label{explain-bounds}\\
		L_a &= L_v^{\trans}, & U_a &= U_v^{\trans}, &&\quad a = ((v,l_1,\arr),(v,l_2,\dep))\in \cA_\trans(\cL) \nonumber 
	\end{alignat}

To correctly model the passenger routing, the EAN has to be extended. Therefore, we include auxiliary events
representing source and target nodes for all OD pairs as well as activities connecting these
auxiliary events to the rest of the EAN. Along the lines of \cite{mariediss}, we receive:

\begin{notation}
\label{nota-EAN-routing}  
Given an event-activity network $\cN(\cL)=(\cE(\cL),\cA(\cL))$, the \emph{extended event-activity network}
$\bar{\cN}(\cL)=(\bar{\cE}(\cL), \bar{\cA}(\cL))$ is the following extension of $\cN(\cL)$: $\bar{\cE}(\cL)=\cE(\cL)\cup \cE_{\aux}(\cL)$, $\bar{\cA}(\cL) = \cA(\cL) \cup \cA(\cL)_{\aux}$ with
\begin{align*}
	\cE_{\aux}(\cL)&=\{(v,\src), (v,\tar) \colon v \in V\}\\
	\cA_{\aux}(\cL) &= \{((v,\src),(v,l,\dep)) \colon v \in l \cap V, l \in \cL\}\\
	& \qquad \cup \{((v,l,\arr),(v,\tar)) \colon v \in l \cap V, l \in \cL\}.
\end{align*}
For the auxiliary activities $a \in \cA_\aux(\cL)$ we set $L_a=U_a=0$.
\end{notation}

\subsection{Passenger routing (Pass)} \label{sec:app:passenger_routing}

Using the node-arc-incidence matrix $A(\cL)$ of $\bar{\cN}(\cL)$, binary
variables $p_a^{u,v}$ for determining whether activity $a \in \cA(\cL)$ is part of a $u - v$ path and vector $b^{u,v}$
with  \[b^{u,v}_{i} =
		\begin{cases}
			1, &\textup{if }  i=(u,\src)\\
			-1, &\textup{if } i=(v,\tar)\\
			0, &\textup{otherwise}
		\end{cases}		\]
		
we get the following flow formulation for finding a passenger routing.

\begin{alignat*}{5}
&& \min \sum_{(u,v) \in \OD} \sum_{a \in \cA(\cL)} & p_a^{u,v} \cdot L_a    && &\\
&&\st \quad A(\cL) \cdot (p^{u,v}) & =  b^{u,v} &&\qquad  (u,v) \in \OD \\
&& p_a^{u,v} & \in \{0,1\} && \qquad a \in \bar{\cA}(\cL), (u,v) \in \OD
\end{alignat*}

\subsection{Timetabling (Tim)}\label{sec:app:timetabling} 

The classical integer programming model for periodic timetabling uses
the so-called modulo parameters $z_a \in \Z$ for all $a \in \cA(\cL)$ to model
the periodicity of the timetable. As input, one needs the event-activity network $\cN(\cL)$
(which depends on stage~1) and 
the number of passengers
$w_a(\bar{p})$ which use activity $a \in \cA(\cL)$. These numbers depend on the paths
computed in stage~2.

\begin{alignat}{5}
	&&\min \sum_{a \in \cA(\cL)} w_a(\bar{p}) \cdot (\pi_j & - \pi_i + z_a  \cdot T) 
	&& \nonumber\\
	&& \st \quad  \pi_j - \pi_i + z_a \cdot T & \geq  L_a \qquad && a=(i,j) \in \cA(\cL) \label{PESP}\\
		&& \pi_j - \pi_i + z_a \cdot T & \leq  U_a &&  a=(i,j) \in \cA(\cL) \nonumber\\
		&&\pi_i & \in \{0, \ldots, T-1\} \quad && i \in \cE(\cL) \nonumber\\
		&& z_a & \in \Z && a \in \cA(\cL) \nonumber
\end{alignat}

Linearization of \eqref{TT3}:
\begin{alignat}{5}
&& \eta_a &= \bar{y}_{l_1} \cdot \bar{y}_{l_2} && a \in \cA(l_1,l_2)\nonumber \\
\intertext{can be linearized to}
  && \eta_a &\leq \bar{y}_{l_1} & \ \ \ &  a \in \cA(l_1,l_2) \label{tmp1}\\
  && \eta_a &\leq \bar{y}_{l_2} &&  a \in \cA(l_1,l_2) \label{tmp2}\\
  && \eta_a &\geq \bar{y}_{l_1} + \bar{y}_{l_2} -1 &&  a \in \cA(l_1,l_2). \label{tmp3}
\end{alignat}

\subsection{Vehicle scheduling (Veh)}\label{sec:app:veh} 
A \emph{trip} $\tau=(\rep, l)$ is determined by the line $l \in \cL$ that is covered and the period $\rep \in \cP$ in which the trip starts. It has to be operated by one vehicle end-to-end.
We use  boolean variables $x_{(\rep_1,l_1),(\rep_2,l_2)}$ to indicate if trip $(\rep_2,l_2)$ is operated by the vehicle that directly before that operated trip $(\rep_1,l_1)$ and boolean variables $x_{\depot,(\rep,l)}$, $x_{(\rep,l),\depot}$ to indicate if trip $(\rep,l)$ is the first or last trip in a vehicle route operated from or to the depot. If $x_{(\rep_1,l_1),(\rep_2,l_2)}=1$, the difference between the end time $\End_{\rep_1,l_1}$ of trip $(\rep_1,l_1)$ and the start time $\Start_{\rep_2,l_2}$ of trip $(\rep_2,l_2)$ has to be at least $L_{l_1,l_2}$.
We get the following IP formulation for the vehicle scheduling problem. 

\begin{alignat}{5}
		&& \min \Cost(\Veh)\qquad\qquad\qquad \qquad &&& \nonumber\\
		&& \st \quad  x_{(\rep_1,l_1),(\rep_2,l_2)}\cdot L_{l_1,l_2} & \leq \Start_{\rep_2,l_2}-\End_{\rep_1,l_1} \quad  &&\rep_1,\rep_2\in \cP,l_1,l_2\in \cL \nonumber\\		
		&& \sum_{\rep_1 \in \cP} \sum_{l_1 \in \cL} x_{(\rep_1,l_1),(\rep_2,l_2)} + x_{\depot,(\rep_2,l_2)} &= 1  \quad &&  \rep_2\in \cP,l_2\in \cL \label{VEHSCHED}\\
		&& \sum_{\rep_2\in \cP} \sum_{l_2 \in \cL} x_{(\rep_1,l_1),(\rep_2,l_2)} + x_{(\rep_1,l_1),\depot}  &= 1 \quad &&  \rep_1\in \cP,l_1\in \cL \nonumber \\
		&&  x_{(\rep_1,l_1),(\rep_2,l_2)} & \in \{0,1\} && \rep_1, \rep_2 \in \cP, l_1,l_2 \in \cL \nonumber\\
		&& x_{\depot, (\rep,l)}, x_{(\rep,l),\depot} &\in \{0,1\} && p\in \cP, l \in \cL \nonumber
\end{alignat}

Depending on stages 1 to 3, the vehicle scheduling problem can be formulated as follows. Note that while the start and end times $\Start_{\rep,l}$, $\End_{\rep,l}$ of trips  $(\rep,l)$, $\rep \in \cP, l \in \cL^0$, as well as the duration of a trip on line $l$ $\dur_l$ are now variables, their values can be predetermined as they only depend on fixed variables $\pi,z$.

The objective function $f_4(\bar{y}, \bar{p}, \bar{\pi}, \bar{z}; x)$ represents the costs $\Cost(\Veh)$ of vehicle schedule $\Veh$. They depend on the duration and distance covered by the vehicles as well as the number of vehicles needed. We distinguish between the time and distance needed for trips, i.e., when passengers may be on board, and the time and distance needed for so-called empty (or deadheading) trips when no passengers may be on board. These empty trips include time between two consecutive trips in a vehicle route, the distance of potential relocation (or deadheading) trips as well as trips to and from the depot at the end and the start of a vehicle route, respectively. The details are listed in Table~\ref{tab:obj:vs}.

\begin{flalign}
\min \quad & f_4(\bar{y}, \bar{p}, \bar{\pi}, \bar{z}; x) = && \nonumber\\ 
		&   \gamma_1 \cdot \sum_{l \in \cL^0} \bar{y}_l \cdot \dur_l\label{obj:vs:line_dur}&&\\
		&  +\gamma_2 \cdot \sum_{l \in \cL^0}\bar{y}_l \cdot \length_l \label{obj:vs:line_length} &&\\
		& + \gamma_3 \cdot \sum_{\rep_1 \in \cP} \sum_{l_1 \in \cL^0} 
			\Big( 
				\sum_{\rep_2 \in\cP} \sum_{l_2 \in \cL^0} 
					\big( \Start_{(\rep_2,l_2)}-\End_{(\rep_1,l_1)}\big)\cdot x_{(\rep_1,l_2),(\rep_2,l_2)}\nonumber&&\\
				&\qquad \qquad\qquad+ x_{\depot, (\rep_1,l_1)}\cdot L_{\depot,l_1} + x_{(\rep_1,l_1),\depot}\cdot L_{l_1,\depot}
			\Big)\label{obj:vs:turn_dur} &&\\
		&  + \gamma_4 \cdot \sum_{\rep_1 \in \cP} \sum_{l_1 \in \cL^0} 
			\Big( 
				\sum_{\rep_2 \in \cP} \sum_{l_2 \in \cL^0}
				\big(
					x_{(\rep_1,l_1),(\rep_2,l_2)} \cdot D_{l_1,l_2}
				\big) \nonumber &&\\
				&\qquad \qquad\qquad+ x_{\dep, (\rep_1,l_1)}\cdot D_{\dep,l_1}
				+ x_{(\rep_1,l_1),\dep}\cdot D_{l_1,\dep}
			\Big) \label{obj:vs:turn_dist} &&\\
			& + \gamma_5 \cdot \sum_{\rep \in \cP}\sum_{l \in \cL^0} x_{\depot,(\rep,l)}\label{obj:vs:veh} &&
	\end{flalign}
	
	\begin{alignat}{5}
		\st && \dur_l&=\sum_{a=(i,j) \in \cA(l)} (\bar{\pi}_j-\bar{\pi}_i +\bar{z}_a \cdot T) \;\; 
			&&  l \in\cL^0 \label{TV1}\\
		&& \Start_{\rep,l}&=\rep\cdot T+\bar{\pi}_{\first(l)} && \rep \in \cP, l \in \cL^0  \label{TV2}\\
		&& \End_{\rep,l}&=\rep\cdot T+\bar{\pi}_{\first(l)}+\dur_l && \rep \in \cP, l \in \cL^0 \label{TV3}\\
		&& \Start_{\rep_2,l_2}-\End_{\rep_1,l_1} &\geq  x_{(\rep_1,l_1),(\rep_2,l_2)}\cdot L_{l_1,l_2}
			&&\nonumber\\
			&& &  \quad-M'\cdot (1-x_{(\rep_1,l_1),(\rep_2,l_2)}) \quad&&  \rep_1,\rep_2\in \cP,l_1,l_2 \in \cL^0 \label{V1}\\
			&& \bar{y}_{l_2} &=  \sum_{\rep_1 \in \cP} \sum_{l_1 \in \cL^0} x_{(\rep_1,l_1),(\rep_2,l_2)} &&\nonumber\\
			&& &  \quad+ x_{\depot,(\rep_2,l_2)}   &&  \rep_2\in \cP,l_2\in \cL^0\label{LV1}\\
		&& \bar{y}_{l_1} &=  \sum_{\rep_2\in \cP} \sum_{l_2 \in \cL^0} x_{(\rep_1,l_1),(\rep_2,l_2)}&&\nonumber\\
			&& &  \quad+ x_{(\rep_1,l_1),\depot}  &&  \rep_1\in \cP,l_1\in \cL^0\label{LV2}\\
		&& x_{(\rep,l),\bullet} & \leq  \bar{y}_l  && \rep \in \cP, l \in \cL^0 \label{LV3}\\
		&& x_{\bullet, (\rep,l)} & \leq  \bar{y}_l  && \rep \in \cP, l \in \cL^0 \label{LV4}
	\end{alignat}
		\begin{alignat}{5}
\dur_l, \Start_{\rep,l}, \End_{\rep,l}  
&\in \N, \label{var1}\\ 
x_{(\rep_1,l_1),(\rep_2,l_2)},  x_{\depot,(\rep,l)}, x_{(\rep,l),\depot} &\in \{0,1\} \label{var2}
	\end{alignat}

\begin{table}[h]
{\small
\begin{center}
\begin{tabular}{cccl}
\toprule
\textbf{Cost type} &  & \makecell{\textbf{Weighting}\\\textbf{factor}} &  \textbf{Components}\\ 
\midrule
\makecell{time based\\ trip costs} & \eqref{obj:vs:line_dur} & $\gamma_1$ & $\dur_l$ duration of a trip on line $l$ \\ 
\midrule
\makecell{distance based\\ trip costs} & \eqref{obj:vs:line_length} & $\gamma_2$ & $\length_l$ length of a trip on line $l$  \\ 
\midrule
\makecell{time based\\ empty trip costs} & \eqref{obj:vs:turn_dur} & $\gamma_3$ & \makecell[l]{$\Start_{(\rep_2,l_2)}-\End_{(\rep_1,l_1)}$ duration of empty trip\\
$L_{\depot,l}$ time from depot to start of line $l$\\
$L_{l,\depot}$ time from end of line $l$ to depot}\\ 
\midrule 
\makecell{distance based\\ empty trip costs} & \eqref{obj:vs:turn_dist} & $\gamma_4$ & \makecell[l]{$D_{l_1,l_2}$ length of empty trip\\
$D_{\depot,l}$ distance from depot to start of line $l$\\
$D_{l,\depot}$ distance from end of line $l$ to depot} \\ 
\midrule 
\makecell{vehicle based\\ costs} & \eqref{obj:vs:veh} & $\gamma_5$ & number of vehicles \\ 
\bottomrule
\end{tabular} 
\end{center}
\caption{All parts of the objective function considered in vehicle scheduling.}\label{tab:obj:vs}
}
\end{table}

In constraints \eqref{TV1}, \eqref{TV2} and \eqref{TV3}, the duration of a trip on line $l$ as well as the start and end time of trip $(\rep,l)$, $\rep \in \cP, l \in \cL^0$, are computed. Note that the variables $\bar{\pi},\bar{z}$ are already fixed and that $\first(l)$, $l \in \cL^0$, represents the first event in line $l$. Constraint \eqref{V1} ensures that the time between the start and end of consecutive trips is sufficiently long while constraints \eqref{LV1} to \eqref{LV4} model the vehicle flow for all operated lines. 

\end{document}